\title[Spectrum and local eventual positivity]{Spectral properties of locally eventually positive operator semigroups}
\author[J. Mui]{Jonathan Mui}
\address{Jonathan Mui, School of Mathematics and Statistics, University of Sydney, NSW 2006, Australia}
\email{jonathan.mui@sydney.edu.au}
\subjclass[2010]{Primary: 47D06, 47A10. Secondary: 35B40}
\keywords{Eventually positive semigroups, local eventual positivity, peripheral point spectrum, spectral bound}
\date{\today}
\numberwithin{equation}{section}
\theoremstyle{plain}
\newtheorem{theorem}{Theorem}[section]
\newtheorem{proposition}[theorem]{Proposition}
\newtheorem{lemma}[theorem]{Lemma}
\newtheorem{corollary}[theorem]{Corollary}
\theoremstyle{definition}
\newtheorem{definition}[theorem]{Definition}
\newtheorem{example}[theorem]{Example}
\theoremstyle{remark}
\newtheorem{remark}[theorem]{Remark}
\DeclareMathOperator{\rg}{rg}
\DeclareMathOperator{\dist}{dist}
\DeclareMathOperator{\codim}{codim}
\DeclareMathOperator{\abs}{abs}
\DeclareMathOperator{\hol}{hol}
\let\Re\relax
\DeclareMathOperator{\Re}{Re}
\newcommand{\NN}{\mathbb{N}}
\newcommand{\ZZ}{\mathbb{Z}}
\newcommand{\RR}{\mathbb{R}}
\newcommand{\CC}{\mathbb{C}}
\begin{document}
	
\begin{abstract}
	This paper considers strongly continuous semigroups of operators on Banach lattices which are \emph{locally eventually positive}, a property that was first investigated in the context of concrete fourth-order evolution equations. We construct a simple example to show that the typical assumptions on the spectrum of the semigroup generator considered currently in the literature are far from necessary in the more general setting of local eventual positivity. Under minimal additional assumptions, we obtain results on the asymptotic behaviour of orbits, as well as necessary conditions on the peripheral point spectrum of locally eventually positive semigroups.
\end{abstract}

\maketitle

\section{Introduction}

\subsection*{Statement of main results}
Semigroups of linear operators have long been used to study linear evolution equations in an abstract functional analytical setting. When the underlying space is an ordered function space, then the notion of a positive semigroup is relevant. Today, much is known about stability and asymptotic behaviour of positive semigroups, see for example the classic monograph~\cite{AGG} or the more recent text~\cite{BFR}. By contrast, the study of \emph{eventually positive semigroups} --- in which, roughly speaking, positivity occurs only for sufficiently large times --- is still relatively young. Eventual positivity for matrix semigroups was studied in~\cite{NT}, and since then has developed into a sub-field of its own right, as a natural extension of Perron-Frobenius theory. Motivated by the case study~\cite{Dan}, a systematic theory of eventually positive semigroups and resolvents on infinite-dimensional Banach lattices was developed by Daners, Gl\"{u}ck and Kennedy in~\cite{DGK1,DGK2}. Within these papers, the reader will already find various applications to partial differential equations. For more recent developments, we point out in particular~\cite{DG18,ArG21} for theoretical results, and~\cite{DKP,GM} for applications to fourth-order differential operators.

However, it was discovered in~\cite{GG-lep} that a still weaker positivity property was required to describe the asymptotic behaviour of solutions to the \emph{biharmonic heat equation} $u_t + (-\Delta)^2 u=0$ on $\RR^N$. This phenomenon, now known as \emph{local eventual positivity}, has been investigated in many other concrete PDE problems, and is the main subject of this article. The intuitive idea is that the solution becomes eventually positive on every compact subset of $\RR^N$, but not globally. Based on this idea, we formulate an abstract notion of a locally eventually positive semigroup on a Banach lattice, and derive some spectral properties under quite general conditions. We give a taste of some key results below, without yet stating the technical definitions.
\begin{theorem}
	\label{thm:LEP-intro}
	Let $(e^{tA})_{t\ge 0}$ be a real $C_0$-semigroup on a complex Banach lattice $E$. Assume that the spectral bound $s(A)$ of the generator satisfies $s(A)>-\infty$.
	\begin{enumerate}[\upshape(i)]
		\item \emph{(Asymptotic positivity)} If the semigroup is locally eventually positive and the orbits are relatively compact, then for each $f\in E$ it holds that $\dist(e^{tA}f, E_+) \to 0$, where $E_+$ is the positive cone of $E$.
		\item \emph{(Triviality of peripheral point spectrum)} If the semigroup is strongly locally eventually positive and the orbits of the rescaled semigroup $(e^{t(A-s(A))})_{t\ge 0}$ are relatively weakly compact, then the intersection of the point spectrum of the generator $\sigma_p(A)$ with the line $s(A)+i\RR$ is contained in $\{s(A)\}$.
	\end{enumerate}
\end{theorem}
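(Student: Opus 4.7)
The two parts share a common backbone: use the (weak or norm) compactness hypothesis on the orbit of the rescaled semigroup $B := A - s(A)$ to extract cluster points as $t \to \infty$, and then exploit (strong) local eventual positivity to constrain them. I will describe each part in turn, together with where the genuine difficulty lies.

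\textbf{Part (i).} Fix $f \in E$ and let $g$ be any cluster point of $(e^{tA}f)_{t \ge 0}$ along some sequence $t_n \to \infty$. The strategy is to show $g \in E_+$; combined with relative compactness of the orbit, this immediately forces $\dist(e^{tA}f, E_+) \to 0$ (otherwise relative compactness would produce a cluster point at strictly positive distance from $E_+$). To show $g \in E_+$, I would first treat $f \in E_+$ and use the abstract definition of local eventual positivity to deduce that, when tested against every relevant local projection or positive linear functional supported in the appropriate principal ideal, $e^{tA}f$ is nonnegative for $t$ large. Passing this inequality to the limit yields the same property for $g$, and since such local tests jointly detect membership in $E_+$, we conclude $g \in E_+$. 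For general $f \in E$, splitting $f = f^+ - f^-$ and invoking linearity should reduce the statement to the positive case.

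\textbf{Part (ii).} The proof is by contradiction. Suppose $i\beta \in \sigma_p(B)$ with $\beta \neq 0$ and fix an eigenvector $v$; then $e^{tB}v = e^{it\beta}v$ is a periodic orbit. Writing $v = u + iw$ with real $u,w$, the standard modulus inequality $|e^{tB}v| \le e^{tB}|v|$ together with $|e^{tB}v|=|v|$ yields $e^{tB}|v| \ge |v|$ for all $t \ge 0$, so the orbit of the positive element $|v|$ is bounded below. Using the assumed relative weak compactness of the orbit of $|v|$ under $(e^{tB})$, an Eberlein mean ergodic argument produces a positive fixed vector $h \ge |v|$ of $(e^{tB})_{t \ge 0}$. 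Strong local eventual positivity applied to $h$ (and to $|v|$) should then force the associated local principal ideal to be dominated by the fixed direction $h$, leaving no room for the rigid rotation $e^{it\beta}v$ in the real $2$-plane $\mathrm{span}_\RR\{u,w\}$: a bounded nontrivial rotation cannot remain locally dominated from below by a fixed positive element. This forces $v = 0$, a contradiction.

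The main obstacle I anticipate is the final rigidity step of Part~(ii): converting strong local eventual positivity --- which guarantees eventual lower bounds only inside a principal ideal, not globally --- into a statement that excludes complex peripheral eigenvectors. In the globally positive case this is classical via strict positivity of a fixed eigenvector, but here one must track the local reference element carefully through the mean ergodic limit and the modulus inequality, ensuring that the ergodic limit $h$ is nontrivial in the relevant local ideal and that the associated real rotation $\cos(t\beta)u - \sin(t\beta)w$ cannot persistently sit inside the local order-interval bounded by $h$. By contrast, Part~(i) should be essentially routine once the abstract definition of local eventual positivity is combined with relative compactness.
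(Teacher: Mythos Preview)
Your cluster-point argument is valid for $f\in E_+$: if $e^{t_n A}f\to g$ and $m$ is fixed, then $P_m e^{t_n A}f\ge 0$ for large $n$, so $P_m g\ge 0$; letting $m\to\infty$ gives $g\in E_+$, and relative compactness finishes. This is a perfectly good alternative to the paper's route, which instead uses that strong convergence $P_n\to I$ is uniform on the compact orbit to estimate $\dist(e^{tA}x,E_+)\le\|(I-P_n)e^{tA}x\|$ directly. Note however that your reduction of general $f$ to $f^\pm$ does not work: $\dist(\,\cdot\,,E_+)$ is not linear, and indeed the conclusion is false for, say, $f=-g$ with $g\in E_+$ unless orbits decay to $0$. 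The detailed statement in the paper (Theorem~\ref{thm:asymp-pos}) is in fact only for $f\in E_+$.

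\textbf{Part (ii).} There is a genuine gap at the very first step. You invoke the ``standard modulus inequality'' $|e^{tB}v|\le e^{tB}|v|$, but this inequality holds only for \emph{positive} operators, and the hypothesis here is merely strong local eventual positivity. Without it you cannot conclude $e^{tB}|v|\ge |v|$, the mean ergodic construction of a dominating fixed vector $h\ge|v|$ collapses, and the whole argument unwinds. The paper circumvents this by not working with a single eigenvector at all. It applies the Jacobs--deLeeuw--Glicksberg decomposition to the relatively weakly compact semigroup: the Sushkevich kernel $\mathcal{K}$ is shown (via the LEP property and the approximation $Q_n\to I$) to consist of positive operators, so its unit $P$ is a positive projection and the restricted semigroup on $F=PE$ is a group of lattice isomorphisms. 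The strong LEP property then enters only to prove $\dim F=1$: two disjoint positive elements $x,y\in F$ would have $Q_n e^{tA}x$ and $Q_n e^{tA}y$ quasi-interior in the sublattice generated by $Q_n E$, forcing $P=0$ on each $Q_n E$ and hence $P=0$, a contradiction. Your anticipated ``rigidity step'' difficulty is real, but the more basic obstruction is that the modulus inequality you start from is simply unavailable in this setting.
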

The precise notion of (strong) local eventual positivity will be introduced in Definition~\ref{def:abstract-LEP}. Statement (i) is proved in Theorem~\ref{thm:asymp-pos}, and Statement (ii) is the content of Theorem~\ref{thm:strong-pos}. It should be noted that the above results do not require \emph{a priori} assumptions on the spectrum of the semigroup generator, in contrast to existing results on local eventual positivity in the abstract Banach lattice setting. We will discuss this aspect further below.

\subsection*{Organisation of the article}
At the end of this introductory section, the reader will find a brief overview of known results on local eventual positivity, which gives some context to the results of this paper. Section~\ref{sec:example} concerns the analysis of a particular locally eventually positive semigroup which stands apart from the examples studied in the current literature, as it shows that the usual sufficient conditions on the spectrum to obtain local eventual positivity are far from necessary.

In Section~\ref{sec:abstract-stuff}, we formulate an abstract notion of local eventual positivity on Banach lattices (Definition~\ref{def:abstract-LEP}) and study the spectrum of generators of locally eventually positive semigroups under mild regularity assumptions on the semigroup. We prove the results announced in Theorem~\ref{thm:LEP-intro}, and also give sufficient conditions for the generator of a locally eventually positive semigroup to have cyclic peripheral point spectrum (Corollary~\ref{cor:cyclic}). Finally, in Section~\ref{sec:stability} we investigate the question of whether the spectral bound $s(A)$ of the generator of a locally eventually positive semigroup is a spectral value. We obtain an affirmative answer in Theorem~\ref{thm:spectral-bound} under quite general conditions.

\subsection*{Notations and terminology}
We assume the reader is familiar with $C_0$-semigroups and basic notions in the theory of vector lattices (also known as Riesz spaces). Let us fix some conventions that will be used throughout the article.

We use the standard notations $\vee, \wedge$ to denote respectively the supremum and infimum in a vector lattice. A normed vector lattice is a vector lattice equipped with a \emph{lattice norm}, which means that if $|x|\le |y|$, then $\|x\|\le \|y\|$. A \emph{Banach lattice} is, of course, a complete normed vector lattice. By definition, a \emph{complex Banach lattice} is the complexification of a Banach lattice over $\RR$ --- see for example~\cite[II Section 11]{Sch}. If $E$ is a complex Banach lattice, then the underlying Banach space over $\RR$ is called the \emph{real part} of $E$ and denoted by $E_\RR$.

For Banach lattices $E, F$, a linear operator $T:E\to F$ is called \emph{positive} if $Tf\ge 0$ in $F$ for all $f\ge 0$ in $E$. We write $E_+$ and $F_+$ for the positive cones in $E$ and $F$ respectively; thus an operator $T:E\to F$ is positive if and only if $T(E_+)\subseteq F_+$. If $E, F$ are complex Banach lattices, then a bounded linear operator $T:E\to F$ is called \emph{real} if $T(E_\RR) \subseteq F_\RR$. Note that all positive operators are real.

If $A:D(A)\subseteq E\to E$ is a closed, densely defined linear operator on a Banach space $E$, we write $(e^{tA})_{t\ge 0}$ for the semigroup generated by $A$. The \emph{spectral bound} of the generator is the quantity
\begin{equation*}
	s(A) := \sup\{\Re\lambda : \lambda\in\sigma(A)\} \in [-\infty,\infty],
\end{equation*}
with the convention that $s(A)=-\infty$ if $\sigma(A)=\emptyset$.

\subsection*{Recent developments in local eventual positivity}
\label{sec:LEP-background}
In~\cite{GG-lep}, the following phenomenon was observed in solutions to the biharmonic heat equation $u_t + (-\Delta)^2 u=0$ on $\RR^N$: for every non-zero, continuous and compactly supported initial datum $u_0$ with $u_0\ge 0$, for every compact subset $K\subset\RR^N$, there exists a time $\tau>0$ (depending on $K$ and $u_0$) such that the corresponding solution $u=u(t,x)$ satisfies
\begin{equation}
	\label{eq:LEP-concrete}
	u(t,x) > 0 \quad\text{for all } x \in K\text{ and } t\ge\tau.
\end{equation}
In addition, the authors showed that there does \emph{not} exist any $\tau\in (0,\infty)$ for which $u(t,x)>0$ for all $x\in\RR^N$ and all $t\ge\tau$. More informally, eventual positivity of the solution does not hold `globally'. These features justify the term `local eventual positivity'.

For the biharmonic operator $(-\Delta)^2$ on $\RR^N$, local eventual positivity was studied further in~\cite{FGG}, and later in~\cite{FF-lep}, it was shown that solutions to polyharmonic heat equations $u_t + (-\Delta)^\alpha u=0$, for all $\alpha>1$, satisfy~\eqref{eq:LEP-concrete} as well. In the recent preprint~\cite{DGM}, the biharmonic heat equation is studied on infinite cylinders $\RR\times\Omega$ (with $\Omega$ a bounded domain in $\RR^N$ with smooth boundary), where it is shown that local eventual positivity for the evolution equation depends on positivity properties for the corresponding elliptic problem on the cross-section domain $\Omega$. On the other hand, by building on the ideas of~\cite{DGK2,DG18}, an operator-theoretic approach to studying local eventual positivity was initiated by Arora in~\cite{Ar21}. The results on (locally) eventually positive semigroups within the abstract framework have thus far required rather strong assumptions on the spectrum of the generator. In particular, the existence of isolated spectral values is essential, since spectral projections are used extensively. From the point of view of differential equations, this theory is therefore very effective for problems on bounded domains, or on unbounded domains with probability measures --- see~\cite[Section 3.2]{AGRT} for a recent application in the latter setting. 

On the other hand, many examples on unbounded domains do not fall within the scope of the current operator-theoretic setting, such as polyharmonic heat equations on Euclidean space (with the Lebesgue measure). In light of the discussion above, for the further development of the general theory of (local) eventual positivity, it is therefore natural to investigate what can be done if the `usual' spectral assumptions are dropped. The present article represents a step in this direction.

\section{An explicit example}
\label{sec:example}
As we recalled in the introduction, the study of (local and global) eventual positivity of semigroups thus far has relied on rather strong spectral assumptions, such as requiring the spectral bound $s(A)$ of the generator to be a pole of the resolvent operator. By contrast, in this section we show that the generator of a locally eventually positive semigroup may have quite pathological spectrum. The main feature of our example is a distinct `lack of compactness'.

\begin{example}
	\label{exam:LEP-semigroup}
	Consider the Banach lattice $E = L^1(\RR) \cap C_0(\RR)$ with norm $\|f\|_E = \max\{\|f\|_1, \|f\|_\infty\}$. There exists a real $C_0$-semigroup $(T_t)_{t\ge 0}$ on $E$ such that, for every $f\in E_+\setminus\{0\}$ and every compact interval $J\subset\RR$, there exists $\tau\ge 0$ such that
	\begin{equation*}
		(T_t f)(x)>0 \quad\text{for all } x\in J\text{ and } t\ge\tau.
	\end{equation*}
	In short, $(T_t)_{t\ge 0}$ is locally eventually positive in the sense of property~\eqref{eq:LEP-concrete}. Moreover, the generator $A$ of the semigroup satisfies $\sigma(A)=i\RR$.
		
	We give the explicit construction now and postpone the discussion of the spectrum of the generator to Proposition~\ref{prop:example-LEP-semigroup}. Consider the functional
	\begin{equation*}
		\varphi(f) := \int_\RR f(x) \,dx \qquad \text{for all } f \in E.
	\end{equation*}
	Clearly $|\varphi(f)| \le \|f\|_1 \le \|f\|_E$, so $\varphi \in E'$. Hence $G:=\ker\varphi$ is a closed subspace of $E$, and it is obviously translation invariant. Now fix $f_0 \in E$ such that $0<f_0(x) \le 1$ for all $x \in \RR$ and $\int_\RR f_0(x) \,dx = 1$ (for example, one may choose the standard Gaussian $f_0(x)=(2\pi)^{-1/2}e^{-x^2/2}$). Consider the rank-one projection $Pf := (\varphi\otimes f_0)(f) = \varphi(f)f_0$, and observe that $\ker P =\ker\varphi$. Then every $f \in E$ can be expressed as
	\begin{equation*}
		f = Pf - (I-P)f = \varphi(f)f_0 + [f-\varphi(f)f_0],
	\end{equation*}
	and we obtain the direct sum decomposition
	\begin{equation*}
		E = F\oplus G,\qquad \text{where }F=\rg P\text{ and }G=\ker P.
	\end{equation*}
	
	Let $(S_t)_{t\ge 0}$ denote the left translation semigroup on $E$, i.e.\ $(S_t f)(x) := f(x+t)$ for all $f\in E, t\ge 0$. We define a semigroup on $E$ by $T_t := I_F \oplus \tau_t |_{G}$, which has the explicit formula
	\begin{equation}
		\label{eq:LEP-semigroup}
		(T_t f)(x) = \varphi(f)f_0(x) + [f(x+t) - \varphi(f) f_0(x+t)] \qquad t \ge 0, x\in\RR
	\end{equation}
	for all $f \in E$. To see that~\eqref{eq:LEP-concrete} holds, let $f \in E_+ \setminus\{0\}$ be arbitrary, and fix a compact interval $[-R, R]$. Since $f_0(x) > 0$ for all $x \in \RR$ and $f_0$ is continuous on the compact set $[-R, R]$, there exists $\delta=\delta(R) > 0$ such that $f_0(x) \ge\delta$ for all $x \in [-R, R]$. On the other hand, $f_0 \in C_0(\RR)$ implies that there exists $M > 0$ such that $0 < f_0(x) < \delta$ whenever $|x| \ge M$. For all $t > 0$ sufficiently large such that $|x+t| \ge M$, it follows that $0 < f_0(x+t) < \delta$, and therefore there exists $\tau > 0$ such that $f_0(x) - f_0(x+t) > 0$ for all $x \in [-R, R]$ and all $t\ge\tau$. We find
	\begin{equation}
		\label{eq:LEP-uniform}
		(T_t f)(x) = \varphi(f)[f_0(x) - f_0(x+t)] + f(x+t) > 0
	\end{equation}
	for all $x \in [-R,R]$ and all $t \ge \tau$. 
	
	As an additional observation, formula~\eqref{eq:LEP-uniform} shows that the time to positivity can be chosen to depend only on the difference $f_0(x)-f_0(x+t)$, since $\varphi(f)>0$ and $f(x+t) \ge 0$ for all $x\in\RR, t\ge 0$ and $f\in E_+\setminus\{0\}$. Thus the semigroup can in fact be called \emph{uniformly} locally eventually positive.
\end{example}

We now show that the generator of the semigroup constructed above has spectrum equal to the imaginary axis.
\begin{proposition}
\label{prop:example-LEP-semigroup}
	The generator $A$ of the semigroup defined by~\eqref{eq:LEP-semigroup} is given by
	\begin{equation*}
		A = 0_{|F} \oplus B_{|G}, \qquad D(A) = F \oplus D(B_{|G})
	\end{equation*}
	where $B_{|G}$ generates the left translation semigroup on $G$ and is thus given by $Bf = \frac{df}{dx}$ with
	\begin{equation*}
		D(B_{|G}) = \left\{ f \in G : \frac{df}{dx} \in G \right\}.
	\end{equation*}
	The spectrum of $A$ is $\sigma(A) = i\RR$, and 0 is an eigenvalue with a one-dimensional eigenspace spanned by the positive function $f_0$.
\end{proposition}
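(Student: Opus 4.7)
The plan is to exploit the direct sum structure $E = F \oplus G$ throughout. A short calculation using the translation invariance of $\varphi$ (so $\varphi \circ S_t = \varphi$) shows that $P$ commutes with $T_t$ for every $t \ge 0$, hence $F$ and $G$ both reduce the semigroup, with $T_t|_F = I_F$ and $T_t|_G = S_t|_G$. Strong continuity of $(T_t)_{t \ge 0}$ on $E$ then follows from strong continuity of the translation semigroup on $L^1(\RR)$ and on $C_0(\RR)$ separately.

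Differentiating the identity $T_t f - f = S_t(I-P)f - (I-P)f$ at $t = 0$ shows that $f \in D(A)$ if and only if $(I-P)f \in D(B_{|G})$, and in this case $Af = ((I-P)f)'$; this yields the asserted decomposition of $A$ and $D(A)$. The eigenspace at zero is then immediate: if $Af = 0$ with $f = cf_0 + g$ and $g \in D(B_{|G})$, then $g' = 0$ forces $g$ to be constant, and since $g \in G \subseteq C_0(\RR)$ the only admissible constant is $0$, so $f = cf_0$.

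For the spectrum, the direct sum gives $\sigma(A) = \{0\} \cup \sigma(B_{|G})$, and the inclusion $\sigma(B_{|G}) \subseteq i\RR$ is immediate because $(S_t|_G)_{t \in \RR}$ is a bounded $C_0$-group. For the reverse inclusion I would show that every $-i\alpha$ with $\alpha \ne 0$ is an approximate eigenvalue of $B_{|G}$. Fix a non-zero $\phi \in C_c^\infty(\RR)$ and set $\psi_n(x) := \phi(x/n) e^{-i\alpha x}/c_n$ with $c_n := \|\phi(\cdot/n)\|_E$; for all large $n$ one has $c_n = n\|\phi\|_1$, hence $\|\psi_n\|_E = 1$. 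A direct computation gives $(B + i\alpha)\psi_n(x) = n^{-1}\phi'(x/n) e^{-i\alpha x}/c_n$, whose $E$-norm tends to zero. To land in $G$, replace $\psi_n$ by $\tilde\psi_n := \psi_n - \varphi(\psi_n) f_0$; Riemann--Lebesgue (applied to the Fourier transform $\widehat\phi$) yields $\varphi(\psi_n) \to 0$, so the correction preserves both the unit-norm normalisation and the approximate-eigenvalue property in the limit. No separate argument is needed for $\alpha = 0$ since $0 \in \sigma(0_F) \subseteq \sigma(A)$ already, and altogether $\sigma(A) = i\RR$.

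The main obstacle I anticipate is this approximate eigenvector construction: the $L^1$ and $L^\infty$ contributions behave differently under the rescaling $x \mapsto x/n$, so one must carefully identify which dominates for large $n$ and estimate the derivative in both norms; one must also verify $\tilde\psi_n \in D(B_{|G})$, which is automatic because both $\psi_n'$ and $f_0'$ integrate to zero.
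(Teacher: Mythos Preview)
Your argument is correct and follows the paper's overall architecture: identify the generator via the direct sum $E=F\oplus G$, deduce $\sigma(A)\subseteq i\RR$ from the fact that the semigroup extends to a bounded $C_0$-group, and then show every nonzero point of $i\RR$ is an approximate eigenvalue of $B_{|G}$. The eigenspace computation at $0$ is also the same.

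The genuine difference lies in the approximate eigenvector construction. The paper writes down an explicit sequence
\[
w_n(x)=\frac{1}{\sqrt{n}}\Bigl(1+\tfrac{ix}{n}\Bigr)e^{-\frac{\alpha x^2}{2n}+i\alpha x},
\]
engineered so that $\int_\RR w_n\,dx=0$ holds exactly (via a symmetry and integration-by-parts argument), and then verifies the norm bounds and the convergence $\|Bw_n-i\alpha w_n\|_E\to 0$ by direct computation. Your route is more structural: take dilations of a generic bump function times $e^{-i\alpha x}$, observe that the $L^1$-norm dominates the $E$-norm after rescaling, and then project onto $G$ via $I-P$, using the Riemann--Lebesgue lemma to show the correction $\varphi(\psi_n)f_0$ is asymptotically negligible. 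This avoids the bespoke function $w_n$ and the somewhat delicate verification that $\int w_n=0$, at the cost of invoking Riemann--Lebesgue and checking that the correction term does not spoil either the normalisation or the approximate-eigenvector estimate (both of which you handle). Your approach also makes the domain check $\tilde\psi_n\in D(B_{|G})$ transparent, since $\psi_n$ is compactly supported and $f_0'$ integrates to zero. Either method works; yours is arguably more robust and conceptually cleaner, while the paper's is self-contained and avoids any appeal to Fourier analysis.
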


\begin{proof}
	The description of the generator follows from standard facts about subspace semigroups and generators of translation semigroups on $C_0(\RR)$, see for example~\cite[Chapter II, 2.2.3 \& 2.2.10]{EN00}. Since $|\varphi(f)| \le \|f\|_1$, and $\|f_0\|_\infty \le 1$ and $\|f_0\|_1 = 1$, from formula~\eqref{eq:LEP-semigroup} we can derive the simple estimates
	\begin{align*}
		|T_tf(x)| &\le 2\|f\|_1 + \|f\|_\infty \quad\text{for all }x \in \RR, \text{and} \\
		\|T_t f\|_1 &\le 3\|f\|_1 \quad\text{for all } t \ge 0.
	\end{align*}
	Hence
	\begin{equation*}
		\|T_t f\|_E \le 3\|f\|_E
	\end{equation*}
	which shows that $(T_t)_{t\ge 0}$ is a uniformly bounded semigroup. It is also clear from~\eqref{eq:LEP-semigroup} that $(T_t)_{t\ge 0}$ extends to a bounded $C_0$-group on $E$, since this is true of the translation semigroups. Hence we deduce that $\sigma(A) \subseteq i\RR$.
	
	Now we show that every $\lambda\in i\RR \setminus\{0\}$ is an approximate eigenvalue of $A$. Consider the case $\lambda=i\alpha$ with $\alpha>0$. We claim that
	\begin{equation}
		w_n(x) := \frac{1}{\sqrt{n}}\left(1+\frac{ix}{n}\right) e^{-\frac{\alpha x^2}{2n} + i\alpha x}, \qquad x\in\RR, n\in\NN
	\end{equation}
	forms a sequence of approximate eigenvectors for the spectral value $i\alpha$. Firstly we observe that
	\begin{equation*}
		|w_n(x)| = \frac{1}{\sqrt{n}} \sqrt{1+\tfrac{x^2}{n^2}} e^{-\frac{\alpha x^2}{2n}},
	\end{equation*}
	and thus
	\begin{equation*}
		\|w_n\|_1 \ge \frac{1}{\sqrt{n}}\int_\RR e^{-\frac{\alpha x^2}{2n}} \,dx = \sqrt{\frac{2\pi}{\alpha}}.
	\end{equation*}
	On the other hand, it is easily seen that $\|w_n\|_\infty \to 0$ as $n\to\infty$, and using the elementary inequality $(a+b)^{1/2} \le a^{1/2} + b^{1/2}$ for all $a,b\ge 0$, we estimate
	\begin{align*}
		\int_\RR |w_n(x)| \,dx &\le \frac{1}{\sqrt{n}}\int_\RR e^{-\frac{\alpha x^2}{2n}} \,dx + \frac{1}{n\sqrt{n}}\int_\RR |x| e^{-\frac{\alpha x^2}{2n}}\,dx \\
		&= \sqrt{\frac{2\pi}{\alpha}} + \frac{1}{\sqrt{n}} \int_\RR |y| e^{-\frac{\alpha y^2}{2}} \,dy \\
		&\le \sqrt{\frac{2\pi}{\alpha}} + \int_\RR |y| e^{-\frac{\alpha y^2}{2}} \,dy,
	\end{align*}
	which shows that $\sup_{n\in\NN}\|w_n\|_1 <\infty$. Hence there exists a constant $C>0$ such that
	\begin{equation*}
		\sqrt{\frac{2\pi}{\alpha}} \le \|w_n\|_1 + \|w_n\|_\infty = \|w_n\|_E \le C
	\end{equation*}
	for all $n\in\NN$. Thus $(w_n)_{n\in\NN}$ is a bounded sequence in $E$ with norms uniformly bounded away from 0. Clearly each $w_n$ is a smooth function. To show that $w_n\in D(A)$, it remains to check that $\int_\RR w_n \,dx = 0$ for all $n\in\NN$. Observe that
	\begin{equation*}
		 w_n = (u_n+iv_n)e^{i\alpha(\cdot)}, \quad\text{where } u_n(x)=\frac{1}{\sqrt{n}} e^{-\frac{\alpha x^2}{2n}}, \quad v_n(x)=\frac{x}{n\sqrt{n}}e^{-\frac{\alpha x^2}{2n}}.
	\end{equation*}
	Then $v_n = -(1/\alpha)u_n'$, and integration by parts reveals that
	\begin{equation}
		\label{eq:un-vn-ibp}
		\int_\RR u_n(x)\cos(\alpha x) \,dx = -\frac{1}{\alpha} \int_\RR u_n'(x) \sin(\alpha x) \,dx = \int_\RR v_n(x)\sin(\alpha x)\,dx.
	\end{equation}
	By considering odd and even symmetries together with~\eqref{eq:un-vn-ibp}, we find
	\begin{align*}
		\int_\RR w_n(x) \,dx &= \int_\RR (u_n(x) + iv_n(x))(\cos(\alpha x)+i\sin(\alpha x)) \,dx \\
		&= \int_\RR [u_n(x)\cos(\alpha x) - v_n(x)\sin(\alpha x)] + i[v_n(x)\cos(\alpha x)+u_n(x)\sin(\alpha x)] \,dx \\
		&= 0
	\end{align*}
	and hence $w_n \in D(A)$ for all $n\in\NN$. Finally, an easy computation yields
	\begin{equation*}
		Aw_n - i\alpha w_n = w_n' - i\alpha w_n = \frac{1}{n\sqrt{n}}\left(1-\frac{\alpha x}{n}\right) e^{-\frac{\alpha x^2}{2n}+i\alpha x}.
	\end{equation*}
	Clearly $\|Aw_n - i\alpha w_n\|_\infty \le C'n^{-3/2} \to 0$ as $n\to\infty$, for an appropriate constant $C'>0$. Then
	\begin{align*}
		\|Aw_n - i\alpha w_n\|_1 &\le \frac{1}{n\sqrt{n}}\int_\RR e^{-\frac{\alpha x^2}{2n}} \,dx + \frac{\alpha}{n^2\sqrt{n}}\int_\RR |x| e^{-\frac{\alpha x^2}{2n}} \,dx \\
		&= \frac{1}{n}\sqrt{\frac{2\pi}{\alpha n}} + \frac{2\alpha}{n^2\sqrt{n}}\int_0^\infty xe^{-\frac{\alpha x^2}{2n}} \,dx \\
		&= \frac{1}{n}\sqrt{\frac{2\pi}{\alpha n}} + \frac{2}{n\sqrt{n}} \longrightarrow 0
	\end{align*}
	as $n\to\infty$. We conclude that $\|Aw_n - i\alpha w_n\|_E \to 0$ as $n\to\infty$, and hence $(w_n)_{n\in\NN}$ is a sequence of approximate eigenvectors corresponding to $\lambda=i\alpha$. If $\alpha<0$, we simply write $0<\beta:=-\alpha$, and apply the above construction with $\beta$ in place of $\alpha$. Finally, it is obvious that $Af_0 = 0$, so $F = \mathrm{span}(f_0)$ is the one-dimensional eigenspace corresponding to the eigenvalue 0.
\end{proof}

\begin{remark}
	Note that the spectral bound $s(A)$ of the semigroup in Example~\ref{exam:LEP-semigroup} is $0$, and is an eigenvalue of the generator $A$. Proposition~\ref{prop:example-LEP-semigroup} shows that $0$ is not a pole of the resolvent, since it is not an isolated spectral value. Nevertheless, the associated eigenspace is one-dimensional and spanned by the strictly positive function $f_0$. One may be tempted to apply~\cite[Theorem 3.3]{Ar21} to deduce at least individual local eventual positivity, but this is not possible since the semigroup~\eqref{eq:LEP-semigroup} does not converge strongly on $E$ as $t\to\infty$.
\end{remark}

\section{General results on the spectrum of locally eventually positive semigroups}
\label{sec:abstract-stuff}

\subsection{Lattice homomorphisms and band projections}
In this section, we present some consequences of local eventual positivity under minimal assumptions on the spectrum of the generator. Recall that in a vector lattice $E$, two elements $x,y$ are \emph{disjoint} if $|x|\wedge |y|=0$, and we write $x \perp y$. Given a non-empty subset $A\subset E$, the \emph{disjoint complement} $A^{\mathrm{d}}$ of $A$ consists of all $x\in E$ for which $x\perp y$ for all $y\in A$. A \emph{band} $B$ is an ideal that is \emph{order closed}, that is, if $D\subset B$ is a subset such that $\sup D$ exists in $E$, then $\sup D \in B$.

We will also require some special types of positive operators. If there is an ideal $A$ of $E$ such that $E=A\oplus B$, then $B$ is called a \emph{projection band}, and in this case, it holds that $A=B^\mathrm{d}$. We may then define a positive projection $P : E\to E$ such that $\rg P = B$. We call $P$ the \emph{band projection} (or \emph{order projection} in some texts) onto $B$, and the complementary projection $I-P$ satisfies $\rg (I-P)=B^{\mathrm{d}}$. It is known that a projection $P$ on a vector lattice $E$ is a band projection if and only if $0 \le P \le I$, where $I$ is the identity operator on $E$. Moreover, if $A,B$ are bands in $E$ with corresponding band projections $P_A, P_B$, then
\begin{equation}
\label{eq:band-proj}
	A\subseteq B \iff P_A \le P_B \iff P_A P_B = P_B P_A = P_A.
\end{equation}
If $P$ is the band projection onto a band $B$, then note that $I-P$ is the band projection onto the disjoint complement $B^{\mathrm{d}}$. Proofs for these standard facts may be found, for example, in~\cite[Chapter 1, Section 3]{AB}.

Next, let us recall that for vector lattices $E,F$ over $\RR$, a \emph{lattice homomorphism} is a linear operator $T:E\to F$ such that $T$ preserves the lattice operations: $T(x\vee_E y) = Tx \vee_F Ty$. It is simple to check that $T$ is positive, and that the defining property is equivalent to the condition $|Tx|_F = T|x|_E$. Another useful characterisation is that $T$ is a lattice homomorphism if and only if $T$ is positive and \emph{disjointness preserving}, that is,
\begin{equation}
\label{eq:disjoint-preserve}
	x\wedge_E y = 0 \Rightarrow Tx \wedge_F Ty=0,
\end{equation}
cf.~\cite[Theorem 2.14]{AB}. It follows that every band projection is a lattice homomorphism, since if $x\wedge y=0$ in $E$, then $0\le Px \le x$ and $0\le Py \le y$ imply $0\le Px \wedge Py \le x\wedge y=0$, so $Px \wedge Py=0$.

\subsection{Local eventual and asymptotic positivity}
By way of motivation, let us recall a situation described in~\cite[Section 7]{Ar21}. If $(K_n)_{n\in\NN}$ is an increasing sequence (i.e.\ $K_n \subseteq K_{n+1}$ for all $n\in\NN$) of bands in $E$ such that $\bigcup_{n\in\NN}K_n = E$, and if $P_n$ denotes the band projection onto $K_n$, then by property~\eqref{eq:band-proj}, we have $0\le P_n \le P_{n+1} \le I$ for all $n\in\NN$. Thus we speak of an \emph{increasing sequence} of band projections $(P_n)_{n\in\NN}$. Each projection $P_n$ localises a function $f$ onto the compact subset $K_n$. Given a semigroup $(e^{tA})_{t\ge 0}$, we may then ask if each localised orbit $t\mapsto P_n e^{tA}f$ becomes eventually positive.

With the concepts introduced in the previous subsection, we are ready to formulate an abstract definition of local eventual positivity for $C_0$-semigroups on Banach lattices. Recall also that in a normed vector lattice $E$, a positive element $u\in E$ is called \emph{quasi-interior} if the principal ideal $E_u$ (i.e.\ the ideal generated by $u$) is dense in $E$. 
\begin{definition}
\label{def:abstract-LEP}
	Let $E$ be a complex Banach lattice, and let there be given an increasing sequence of positive operators $(P_n)_{n\in\NN}$ on $E$ converging strongly to the identity. A $C_0$-semigroup $(e^{tA})_{t\ge 0}$ on $E$ with generator $A$ is said to have the \emph{LEP property} with respect to the sequence $(P_n)_{n\in\NN}$ if for every $n\in\NN$ and $f\in E_+\setminus\{0\}$, there exists $\tau>0$ (depending on $n$ and $f$) such that
	\begin{equation}
		\label{eq:LEP}\tag{LEP}
		P_n e^{tA}f \ge 0 \quad\text{for all } t\ge\tau.
	\end{equation}
	If in addition, for all $t\ge\tau$, $P_n e^{tA}f$ is even a quasi-interior point of the closed sublattice generated by $P_n E$ in $E$, we say that the semigroup has the \emph{strong LEP property} with respect to the sequence $(P_n)_{n\in\NN}$.
\end{definition}

\begin{remark}
	The abbreviation LEP stands for \emph{local eventual positivity}. Example~\ref{ex:abstract-LEP} below shows that Definition~\ref{def:abstract-LEP} is a natural abstraction of the property~\eqref{eq:LEP-concrete}.
	
	In Definition~\ref{def:abstract-LEP}, we have admitted the more general notion of an increasing sequence of positive operators. Nevertheless, in many applications, the sequence $(P_n)_{n\in\NN}$ consists of band projections --- we refer the reader~\cite[Section 5]{Ar21} for some examples. On the other hand, note that some commonly occurring Banach lattices do not have non-trivial band projections. This is the case for $E=C(K)$ where $K$ is a compact, connected Hausdorff space (see~\cite[Example 5, p.\ 63]{Sch}).
\end{remark}

\begin{example}
\label{ex:abstract-LEP}
	We give two concrete examples in which the assumptions of Definition~\ref{def:abstract-LEP} hold.
	\begin{enumerate}[(i)]
	\item Let $(\Omega,\mu)$ be a $\sigma$-finite measure space, and let $(K_n)_{n\in\NN}$ be an increasing sequence of measurable subsets of $\Omega$ such that $\bigcup_{n\in\NN}K_n = \Omega$. We consider the Banach lattice $E=L^p(\Omega,\mu)$ for $1\le p<\infty$. It is easy to see that the set of $f\in E$ with support in $K_n$ is a band. For each $n\in\NN$, let $\chi_n$ denote the indicator function of the set $K_n$, and define the band projection $P_n f := f \chi_n$. Then by the dominated convergence theorem, $P_n$ converges strongly to the identity. If a $C_0$-semigroup $(e^{tA})_{t\ge 0}$ on $E$ has the~\eqref{eq:LEP} property with respect to $(P_n)_{n\in\NN}$, this means that for every $n\in\NN$ and $f\in E_+\setminus\{0\}$, there exists $\tau>0$ such that $(e^{tA}f)(x) \ge 0$ for a.e.\ $x\in K_n$, for all $t\ge\tau$.
	
	\item Let $\Omega$ be a locally compact Hausdorff space, and assume there exists an increasing sequence $(K_n)_{n\ge 0}$ of compact subsets such that $\bigcup_{n\ge 0}K_n=\Omega$. For each $n\ge 1$, let $\varphi_n$ be a Urysohn function for the pair of closed sets $K_{n-1}$ and $\overline{\Omega\setminus K_n}$: $\varphi_n$ is continuous on $\Omega$ with $0\le \varphi_n \le 1$, $\varphi_n \equiv 1$ on $K_{n-1}$ and $\varphi_n \equiv 0$ on $\overline{\Omega\setminus K_n}$. On the Banach lattice $C_0(\Omega)$, one verifies easily that the operators $Q_n f :=\varphi_n f$ are lattice homomorphisms, and the sequence $Q_n$ increases strongly to the identity operator.
	\end{enumerate}
\end{example}

Our first result shows that if a semigroup has the~\eqref{eq:LEP} property, then under mild regularity conditions, it is already `asymptotically positive' in the sense of~\eqref{eq:asymp-pos} below. We remark that the notion of \emph{individual asymptotic positivity} for $C_0$-semigroups was defined in~\cite[Definition 8.1]{DGK2} under the additional assumptions $s(A)>-\infty$ and boundedness of the rescaled semigroup $(e^{t(A-s(A))})_{t\ge 0}$. Given $x\in E$, the \emph{orbit} of $x$ under the semigroup $(e^{tA})_{t\ge 0}$ is the set $\{e^{tA}x : t \ge 0\}$.
\begin{theorem}
\label{thm:asymp-pos}
	Let $E$ be a complex Banach lattice, and let there be given a sequence $(P_n)_{n\in\NN}$ of positive operators on $E$ increasing strongly to the identity. Let $(e^{tA})_{t\ge 0}$ be a $C_0$-semigroup on $E$. If~\eqref{eq:LEP} is satisfied with respect to the sequence $(P_n)_{n\in\NN}$, and the semigroup has relatively compact orbits, then for every $x\in E_+$, it holds that
	\begin{equation}
	\label{eq:asymp-pos}
		\dist(e^{tA}x, E_+) \longrightarrow 0 \quad\text{as } t\to\infty.
	\end{equation}
\end{theorem}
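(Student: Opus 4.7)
The plan is to argue by contradiction. The case $x=0$ being trivial, fix $x\in E_+\setminus\{0\}$ and suppose there exist $\varepsilon>0$ and times $t_k\to\infty$ with $\dist(e^{t_kA}x,E_+)\ge\varepsilon$ for every $k$. By the relative compactness of the orbit, I may pass to a subsequence and assume $e^{t_kA}x\to y$ in norm for some $y\in E$. Since $\dist(\,\cdot\,,E_+)$ is $1$-Lipschitz and $E_+$ is norm-closed, this forces $\dist(y,E_+)\ge\varepsilon$, so $y\notin E_+$. The whole strategy is then to show $y\in E_+$, producing the desired contradiction.

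To do this I pass the \eqref{eq:LEP} information to the limit in two stages. First, for a fixed $n\in\NN$, the LEP property applied to $x$ provides $\tau_n>0$ such that $P_ne^{tA}x\in E_+$ for all $t\ge\tau_n$; hence $P_ne^{t_kA}x\in E_+$ for every sufficiently large $k$. Each $P_n$ is positive on a Banach lattice and therefore norm-continuous, so $P_ne^{t_kA}x\to P_ny$, and closedness of $E_+$ yields $P_ny\in E_+$.

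Second, I let $n\to\infty$. The hypothesis that $P_n\to I$ strongly gives $P_ny\to y$, so $y$ is the norm-limit of a sequence in $E_+$; closedness of $E_+$ then yields $y\in E_+$, contradicting the above. There is no serious obstacle here: the argument is simply a double limit, and the only mild point to notice is that the threshold $\tau_n$ in \eqref{eq:LEP} is permitted to depend on both $n$ and $x$, which is exactly how Definition~\ref{def:abstract-LEP} is phrased. Thus no uniformity in $n$ is needed, and the two-stage passage cleanly transfers the local, delayed positivity at each scale $P_n$ into the global asymptotic positivity claimed by \eqref{eq:asymp-pos}.
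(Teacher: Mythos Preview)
Your proof is correct. The approach, however, differs from the paper's. The paper argues directly: using the elementary fact that strong operator convergence is uniform on relatively compact sets, one gets $\sup_{t\ge 0}\|(I-P_n)e^{tA}x\|\to 0$, and then for $t\ge t_n$ (the LEP threshold) the estimate
\[
\dist(e^{tA}x,E_+)\le \dist(P_n e^{tA}x,E_+)+\|(I-P_n)e^{tA}x\|=\|(I-P_n)e^{tA}x\|
\]
immediately yields the conclusion. Your route replaces that uniformity lemma by sequential compactness: extract a limit point $y$ of a hypothetical bad sequence, then push the local positivity through two successive limits (in $k$, then in $n$) to force $y\in E_+$. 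Both arguments are short and elementary; the paper's version has the mild advantage of giving an explicit bound $\sup_{t\ge t_n}\dist(e^{tA}x,E_+)\le \sup_{t\ge 0}\|(I-P_n)e^{tA}x\|$, while yours avoids invoking the uniform-on-compacta lemma altogether and makes the role of closedness of $E_+$ and boundedness of positive operators very transparent.
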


\begin{proof}
	Fix $x\in E_+\setminus\{0\}$. It is an elementary fact that if a sequence of bounded linear operators on a Banach space converges strongly, then the convergence is uniform on relatively compact subsets (see e.g.~\cite[III Lemma 3.7]{Kato}). Since $P_n$ converges strongly to the identity $I$, and the set $\{e^{tA}x : t\ge 0\}$ is relatively compact in $E$, we obtain
	\begin{equation}
	\label{eq:outside-Pn-estimate}
		\lim_{n\to\infty}\sup_{t\ge 0}\|(I-P_n)e^{tA}x\| = 0.
	\end{equation}
	By the~\eqref{eq:LEP} property, for each $n$ there exists $t_n \ge 0$ such that $P_n e^{tA}x \ge 0$ for all $t\ge t_n$, and the sequence $(t_n) \subset [0,\infty)$ may be chosen to be strictly increasing. For each $n\in\NN$, we have
	\begin{equation*}
		\dist(e^{tA}x,E_+) \le \dist(P_n e^{tA}x,E_+) + \dist((I-P_n)e^{tA}x,E_+) \le \|(I-P_n)e^{tA}x\|
	\end{equation*}
	for all $t\ge t_n$. Now~\eqref{eq:outside-Pn-estimate} shows that
	\begin{equation*}
		\sup_{t\ge t_n} \dist(e^{tA}x,E_+) \le \sup_{t\ge 0}\|(I-P_n)e^{tA}x\| \longrightarrow 0
	\end{equation*}
	as $n\to\infty$, which yields $\limsup_{t\to\infty} \dist(e^{tA}x,E_+)=0$. This proves the convergence~\eqref{eq:asymp-pos}.
\end{proof}

We give a simple case in which the assumptions of the above theorem hold.
\begin{corollary}
	Let $(e^{tA})_{t\ge 0}$ be a $C_0$-semigroup on a complex Banach lattice $E$, satisfying the~\eqref{eq:LEP} property with respect to a sequence $(P_n)_{n\in\NN}$ of positive operators on $E$ increasing strongly to the identity. If the semigroup is uniformly bounded, i.e.\ $\sup_{t\ge 0}\|e^{tA}\|<\infty$, and its generator $A$ has compact resolvent, then~\eqref{eq:asymp-pos} holds for every $x\in E_+$.
\end{corollary}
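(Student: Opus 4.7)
The plan is to reduce the corollary to Theorem~\ref{thm:asymp-pos} by verifying that its hypothesis of relatively compact orbits follows from uniform boundedness together with compactness of the resolvent. So the only real work is a standard approximation argument; the \eqref{eq:LEP} hypothesis is carried over verbatim.

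First I would handle smooth initial data. For $f \in D(A)$, the function $t \mapsto e^{tA}f$ is continuously differentiable with $\frac{d}{dt} e^{tA}f = Ae^{tA}f = e^{tA}Af$, so the estimate $\|Ae^{tA}f\| \le M \|Af\|$, where $M := \sup_{t\ge 0}\|e^{tA}\| < \infty$, combined with $\|e^{tA}f\| \le M\|f\|$, shows that the orbit $\{e^{tA}f : t \ge 0\}$ is bounded in the graph norm of $A$. Since $A$ has compact resolvent, the inclusion $(D(A), \|\cdot\|_A) \hookrightarrow E$ is compact, hence this orbit is relatively compact in $E$.

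Next I would pass to arbitrary $f \in E_+$ by a density/total-boundedness argument. Given $\varepsilon > 0$, by density of $D(A)$ choose $g \in D(A)$ with $\|f - g\| < \varepsilon/(2M)$; by uniform boundedness, $\|e^{tA}f - e^{tA}g\| < \varepsilon/2$ for all $t \ge 0$. The orbit of $g$ is relatively compact by the previous step, so it admits a finite $(\varepsilon/2)$-net, which then serves as an $\varepsilon$-net for the orbit of $f$. Hence $\{e^{tA}f : t \ge 0\}$ is totally bounded and, by completeness of $E$, relatively compact. With relative compactness of orbits established, Theorem~\ref{thm:asymp-pos} applies directly to yield $\dist(e^{tA}f, E_+) \to 0$ for every $f \in E_+$.

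There is no real obstacle here; the only subtlety worth flagging is that compactness of the resolvent gives compactness of orbits started in $D(A)$ only because we can control both $\|e^{tA}f\|$ and $\|Ae^{tA}f\|$ uniformly in $t$, which is where uniform boundedness of the semigroup is used in an essential way. Once both steps are in place, the conclusion is immediate from Theorem~\ref{thm:asymp-pos}.
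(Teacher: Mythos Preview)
Your proposal is correct and follows exactly the same strategy as the paper: verify that the orbits are relatively compact and then invoke Theorem~\ref{thm:asymp-pos}. The only difference is that the paper obtains relative compactness of orbits by citing \cite[Corollary V.2.15(i)]{EN00}, whereas you supply a self-contained proof of that fact via the graph-norm boundedness and density argument; both the content and the logic are the same.
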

\begin{proof}
	If the generator $A$ has compact resolvent, then the orbits of the semigroup are relatively compact, see~\cite[Corollary V.2.15(i)]{EN00}. The conclusion then follows from Theorem~\ref{thm:asymp-pos}.
\end{proof}

We have shown above that relative compactness of orbits is a sufficient condition for a semigroup satisfying~\eqref{eq:LEP} to be individually asymptotically positive. In that case, the generator of the semigroup automatically has interesting spectral properties. Recall that the \emph{peripheral spectrum} of $A$ is the part of $\sigma(A)$ lying on the line $\{\lambda\in\CC : \Re\lambda = s(A)\}$, and the \emph{peripheral point spectrum} is the intersection of the peripheral spectrum with the point spectrum $\sigma_p(A)$. A subset $S\subset\CC$ is called \emph{cyclic} if whenever $a+ib \in S$ (for some $a,b\in\RR$), then $a+inb\in S$ for all $n\in\ZZ$.
\begin{corollary}
	\label{cor:cyclic}
	Let $(e^{tA})_{t\ge 0}$ be a $C_0$-semigroup on a complex Banach lattice $E$ with $s(A)>-\infty$. If~\eqref{eq:LEP} is satisfied with respect to a sequence $(P_n)_{n\in\NN}$ of positive operators on $E$ increasing strongly to the identity, and the rescaled semigroup $(e^{t(A-s(A))})_{t\ge 0}$ has relatively compact orbits, then the peripheral point spectrum of the generator $A$ is cyclic.
\end{corollary}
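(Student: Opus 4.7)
The plan is to reduce cyclicity to the asymptotic positivity statement of Theorem~\ref{thm:asymp-pos} by rescaling, and then apply a Greiner-Derndinger-type signum argument adapted to the asymptotically positive setting.

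First I would rescale by setting $B := A - s(A)$, so that $s(B)=0$. Each $P_n$ is positive and $e^{-ts(A)}$ is a positive scalar, so $(e^{tB})_{t\ge 0}$ still satisfies~\eqref{eq:LEP} with respect to $(P_n)_{n\in\NN}$ and has relatively compact orbits by hypothesis. Since $\sigma_p(A)\cap(s(A)+i\RR) = s(A) + (\sigma_p(B)\cap i\RR)$, it suffices to show that $\sigma_p(B)\cap i\RR$ is cyclic. Applying Theorem~\ref{thm:asymp-pos} to $(e^{tB})_{t\ge 0}$ yields individual asymptotic positivity, namely $\dist(e^{tB}x,E_+)\to 0$ for every $x \in E_+$.

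Next, fix $i\beta \in \sigma_p(B)\cap i\RR$ with eigenvector $v \ne 0$; the case $\beta=0$ is immediate. Since $e^{tB}v = e^{it\beta}v$, one has $|e^{tB}v| = |v|$ for every $t\ge 0$. The key intermediate claim is that $|v|$ is a fixed point: $e^{tB}|v| = |v|$. Granted this, cyclicity follows by the classical signum argument: pass to the closed principal sublattice generated by $|v|$, which by Kakutani's M-space theorem is isomorphic to a $C(K)$-space; form the signum $u$ of $v$ relative to $|v|$ in this representation; and verify that the elements $u^n|v|$ for $n\in\ZZ$ are eigenvectors of $B$ for the eigenvalues $in\beta$.

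The main obstacle is establishing the fixed-point property $e^{tB}|v|=|v|$. In the fully positive case this follows immediately from the modulus inequality $|e^{tB}v|\le e^{tB}|v|$ combined with mean ergodicity, which is automatic under relatively compact orbits. Under~\eqref{eq:LEP} the modulus inequality is available only locally, and only for $t$ sufficiently large. The plan is to decompose $v$ into the positive and negative parts of its real and imaginary parts, apply~\eqref{eq:LEP} componentwise at fixed level $n$ for $t$ large, and then pass to the limit $n\to\infty$ using strong convergence $P_n\to I$ together with the uniform estimate $\sup_{t\ge 0}\|(I-P_n)e^{tB}x\|\to 0$ derived in the proof of Theorem~\ref{thm:asymp-pos}, thereby recovering the asymptotic modulus inequality $|v|\le y$ for each limit point $y \in \omega(|v|)$. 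A mean ergodic projection argument, applicable by relative compactness of orbits, then forces equality and yields the fixed-point property. Equivalently, after the rescaling one may directly invoke a cyclicity theorem for individually asymptotically positive $C_0$-semigroups as developed in~\cite{DGK2}, whose hypotheses are precisely those supplied by Theorem~\ref{thm:asymp-pos}.
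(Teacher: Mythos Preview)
Your approach matches the paper's: rescale, apply Theorem~\ref{thm:asymp-pos} to obtain individual asymptotic positivity of the bounded rescaled semigroup, and then invoke a known cyclicity result for such semigroups. One small correction: the relevant cyclicity theorem for individually asymptotically positive semigroups is \cite[Theorem~6.3.2]{GTh} rather than a result in \cite{DGK2}; the latter introduces the notion (Definition~8.1 there), but the cyclicity statement you need is proved in Gl\"{u}ck's thesis via the Jacobs--deLeeuw--Glicksberg machinery you sketch.
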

\begin{proof}
	Since the rescaled semigroup $(e^{t(A-s(A))})_{t\ge 0}$ has relatively compact orbits, in particular it is bounded. From this observation together with Theorem~\ref{thm:asymp-pos}, we find that the semigroup $(e^{tA})_{t\ge 0}$ is individually asymptotically positive as defined in~\cite[Definition 8.1]{DGK2}. In this case, the cyclicity of the peripheral point spectrum of $A$ follows from~\cite[Theorem 6.3.2]{GTh}.
\end{proof}
Let us remark that the proof in reference~\cite{GTh} of cyclicity of the peripheral point spectrum for individually asymptotically positive semigroups uses the Jacobs-deLeeuw-Glicksberg decomposition, which is also an important technical tool in our analysis in Subsection~\ref{sec:LEP-strong}.

We close the current subsection with some technical remarks concerning Definition~\ref{def:abstract-LEP}. At first glance, the definition of the strong~\eqref{eq:LEP} property appears to be rather complicated. A notion of strong positivity that exists currently in the literature is as follows: if $u\in E_+$ is a quasi-interior point, we say that a vector $v\in E$ is \emph{strongly positive with respect to} $u$, denoted by $v\gg_u 0$, if there exists a constant $c>0$ such that $v\ge cu$. Following~\cite[Definition 2.4]{Ar21}, we could say that a $C_0$-semigroup $(e^{tA})_{t\ge 0}$ has the strong~\eqref{eq:LEP} property with respect to the sequence $(P_n)_{n\in\NN}$ and the quasi-interior point $u$ if, for every $n\in\NN$ and $f\in E_+\setminus\{0\}$, there exists $\tau>0$ such that
\begin{equation}
	\label{eq:strong-LEP-alt}
	P_n e^{tA}f \gg_{P_n u} 0 \quad\text{for all }t\ge\tau.
\end{equation}
While the implied constant $c>0$ may depend on $n$, it is assumed to be independent of $t\ge\tau$. Concrete examples in~\cite[Section 5]{Ar21} show that this type of strong positivity does occur. However there are also natural examples where~\eqref{eq:strong-LEP-alt} cannot be satisfied.
\begin{example}
	Consider the biharmonic heat equation $u_t + (-\Delta)^2 u=0$ on $\RR^N$ with initial data $u_0\ge 0$, $u_0\not\equiv 0$, and for simplicity assume that $u_0$ is continuous and compactly supported. In this case, it was shown in~\cite{GG-lep} that for every compact subset $K\subset\RR^N$, there exists $\tau>0$ such that the corresponding solution $u(t,\cdot)$ satisfies $u(t,x)>0$ on $K$ for all $t\ge\tau$. The solution may be explicitly represented via an inverse Fourier transform
	\begin{equation*}
		u(t,x) = \frac{1}{(2\pi)^{N/2}}\int_{\RR^N} e^{-t|\xi|^4}\hat{u}_0(\xi)e^{ix\cdot\xi} \,d\xi, \qquad t>0, x\in\RR^N,
	\end{equation*}
	where $\hat{u}_0$ is the Fourier transform of $u_0$. A simple computation then yields the estimate
	\begin{equation*}
		\|u(t,\cdot)\|_\infty \le C t^{-N/4} \|u_0\|_{L^1(\RR^N)}\qquad\text{for all }t>0,
	\end{equation*}
	for some constant $C>0$. Thus $\|u(t,\cdot)\|_\infty \to 0$ as $t\to\infty$, which shows that the solution $u(t,\cdot)$ cannot dominate a strictly positive multiple of a positive function on any compact set as $t\to\infty$. Hence~\eqref{eq:strong-LEP-alt} does not hold. Intuitively, this is because $0$, the spectral bound of the operator $(-\Delta)^2$ on $\RR^N$, is not an eigenvalue.
\end{example}

\begin{remark}
	\label{rmk:quasi-int}
	In the setting of~\eqref{eq:strong-LEP-alt}, we have $P_n u>0$, i.e. $P_n u$ is a positive \emph{non-zero} vector, for all $n\in\NN$. Indeed, if $u\in E_+$ is a quasi-interior point that belongs to the kernel of a positive operator $S$ on $E$, then $S=0$. This is a straightforward consequence of the following useful characterisation~\cite[II Theorem 6.3]{Sch}: $u$ is a quasi-interior point of a Banach lattice $E$ if and only if for each $x\in E_+$, the sequence $x_n := x\wedge nu$ converges to $x$ in norm.
\end{remark}
Let $S$ be a positive operator on the Banach lattice $E$, and suppose $u$ is a quasi-interior point. If $F$ is the closed sublattice of $E$ generated by the image $S(E)$ and we view $S$ as an operator from $E$ to $F$, then~\cite[II Proposition 6.4]{Sch} shows that $Su$ is quasi-interior in $F$. If $v\in E_+$ is such that $Sv\ge cSu$ for some constant $c>0$, then $E_{Su}\subset E_{Sv}$ follows from the definition of principal ideals, and we deduce that $Sv$ is quasi-interior in $F$ as well. We have thus proved the following relationship between the different notions of local eventual positivity.
\begin{proposition}
	Let $\mathcal{T}=(e^{tA})_{t\ge 0}$ be a $C_0$-semigroup on the complex Banach lattice $E$. Let there be given a sequence of positive operators $(P_n)_{n\in\NN}$ on $E$ increasing strongly to the identity, and a quasi-interior point $u\in E_+$, such that $\mathcal{T}$ has the strong LEP property with respect to $(P_n)_{n\in\NN}$ and $u$ as defined by~\eqref{eq:strong-LEP-alt}. Then $\mathcal{T}$ has the strong~\eqref{eq:LEP} property with respect to $(P_n)_{n\in\NN}$ in the sense of Definition~\ref{def:abstract-LEP}.
\end{proposition}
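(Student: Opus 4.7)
The statement essentially formalises the discussion immediately preceding it, so the plan is to organise that argument into a clean proof.

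First I would fix $n\in\NN$ and $f\in E_+\setminus\{0\}$, and invoke the hypothesis~\eqref{eq:strong-LEP-alt} to obtain $\tau>0$ and a constant $c>0$ (depending on $n$ and $f$) such that $P_n e^{tA}f \ge c\,P_n u$ for all $t\ge\tau$. Set $F_n$ to be the closed sublattice of $E$ generated by $P_n(E)$; this is the ambient space with respect to which we must verify the quasi-interior property from Definition~\ref{def:abstract-LEP}.

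Next I would view $P_n$ as a positive operator from $E$ into $F_n$ and apply~\cite[II Proposition 6.4]{Sch} to conclude that $P_n u$ is a quasi-interior point of $F_n$. This step relies crucially on $u$ being a quasi-interior point of $E$, which is part of the hypothesis of~\eqref{eq:strong-LEP-alt}; by Remark~\ref{rmk:quasi-int} we also know that $P_n u \neq 0$, so the statement is non-trivial.

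The final step is the comparison of principal ideals. Given $v\in (F_n)_{P_n u}$, there exists $\lambda>0$ with $|v|\le \lambda\, P_n u$, and using $c\,P_n u \le P_n e^{tA}f$ I get $|v| \le (\lambda/c)\, P_n e^{tA}f$, so $v \in (F_n)_{P_n e^{tA}f}$. Hence $(F_n)_{P_n u} \subseteq (F_n)_{P_n e^{tA}f}$, and since the former is dense in $F_n$ (by the quasi-interior property of $P_n u$), so is the latter. Thus $P_n e^{tA}f$ is a quasi-interior point of $F_n$ for all $t\ge\tau$, which is exactly the strong~\eqref{eq:LEP} property of Definition~\ref{def:abstract-LEP}.

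There is no real obstacle here beyond correctly identifying which ambient sublattice the quasi-interior property is being asserted in; the only subtlety is that the relevant ambient space is the closed sublattice generated by $P_n(E)$ rather than $E$ itself, and the cited Schaefer proposition is tailored to exactly this situation.
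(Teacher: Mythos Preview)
Your proof is correct and follows essentially the same route as the paper: the paper's argument (given in the paragraph immediately preceding the proposition) likewise applies \cite[II Proposition 6.4]{Sch} to the positive operator $S=P_n$ to see that $P_n u$ is quasi-interior in the closed sublattice generated by $P_n(E)$, and then uses the inequality $P_n e^{tA}f \ge c\,P_n u$ to obtain the inclusion of principal ideals. Your write-up is slightly more explicit about the ideal comparison, but there is no substantive difference.
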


\subsection{Local eventual strong positivity}
\label{sec:LEP-strong}
In this section, we present a result on the triviality of the peripheral point spectrum in the context semigroups that are strongly locally eventually positive in the sense of Definition~\ref{def:abstract-LEP}.
\begin{theorem}
\label{thm:strong-pos}
	Let $E$ be a complex Banach lattice such that there exists an increasing sequence $(Q_n)_{n\in\NN}$ of lattice homomorphisms converging strongly to the identity. Let $(e^{tA})_{t\ge 0}$ be a real $C_0$-semigroup on $E$ such that $s(A)>-\infty$ and the rescaled semigroup $(e^{t(A-s(A))})_{t\ge 0}$ has relatively weakly compact orbits. If the semigroup $(e^{tA})_{t\ge 0}$ satisfies the \emph{strong}~\eqref{eq:LEP} property with respect to $(Q_n)_{n\in\NN}$, then the peripheral point spectrum of the generator $A$ is contained in $\{s(A)\}$.
\end{theorem}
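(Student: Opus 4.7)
The plan is to argue by contradiction, combining the Jacobs-de Leeuw-Glicksberg (JdLG) machinery with the strong~\eqref{eq:LEP} property. Assume that $s(A) + i\beta \in \sigma_p(A)$ with $\beta \neq 0$; after rescaling by $e^{-t s(A)}$ we may take $s(A) = 0$, so $(e^{tA})_{t \ge 0}$ is bounded. Pick $f \neq 0$ with $Af = i\beta f$, so that $e^{tA}f = e^{it\beta}f$. Writing $f = g + ih$ with $g, h \in E_\RR$, the reality of $A$ together with $\beta \neq 0$ forces both $g \neq 0$ and $h \neq 0$; in particular at least one of $g_+ := g \vee 0$, $g_- := (-g) \vee 0$ is nonzero, and without loss of generality $g_+ \neq 0$. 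By the relative weak compactness of orbits and Tychonov's theorem, the set $\{e^{tA} : t \ge 0\}$ is relatively compact in the weak operator topology (WOT), and its closure $\mathbb{K}$ is a compact semitopological semigroup.

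The critical extraction from the~\eqref{eq:LEP} hypothesis is the following: every weak-operator limit $S = \lim_\lambda e^{t_\lambda A}$ along any net $t_\lambda \to \infty$ is a \emph{positive} operator on $E$. Indeed, for each $x \ge 0$ and each $N$, the~\eqref{eq:LEP} property gives $Q_N e^{t_\lambda A} x \ge 0$ for all sufficiently large $t_\lambda$; weak limits preserve positivity, so $Q_N S x \ge 0$; finally, letting $N \to \infty$ and using that $Q_N \to I$ strongly and $E_+$ is norm-closed yields $S x \ge 0$. Because $\{e^{it\beta} : t \ge 0\}$ is dense in the unit circle, a diagonal extraction in WOT produces, for every unimodular $\zeta$, a positive $S_\zeta \in \mathbb{K}$ with $S_\zeta f = \zeta f$.

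Specialising to $\zeta = -1$, the positive operator $S := S_{-1}$ satisfies $Sf = -f$ and hence $Sg = -g$. Applying $S$ to $g = g_+ - g_-$ and using $Sg_\pm \ge 0$ yields $Sg_+ + g_+ = Sg_- + g_-$; since $g_+ \wedge g_- = 0$, the standard disjointness calculus gives $Sg_+ \ge g_-$ and $Sg_- \ge g_+$ in $E$. Invoke now the \emph{strong}~\eqref{eq:LEP} property: for each $N$, the vector $Q_N e^{tA}g_-$ is a quasi-interior point of the closed sublattice $F_N$ generated by $Q_N E$ for all $t$ sufficiently large. Passing through the lattice homomorphism $Q_N$ and combining the resulting inequality $Q_N S g_- \ge Q_N g_+$ in $F_N$ with the disjointness $Q_N g_+ \wedge Q_N g_- = 0$, the principle that a nonzero positive vector cannot be disjoint from a quasi-interior point of a Banach lattice (which is implicit in Remark~\ref{rmk:quasi-int} via the characterisation $x \wedge nu \to x$) should force $Q_N g_+ = 0$ for every $N$. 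Strong convergence $Q_N \to I$ then gives $g_+ = 0$, contradicting our choice.

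The main obstacle is the last step, namely the transfer from the time-dependent quasi-interiority of $Q_N e^{tA}g_-$ to a quasi-interior-type lower bound at the weak limit $Q_N Sg_-$, since quasi-interiority is not preserved under arbitrary weak limits. To close this gap rigorously I expect to sharpen the third step by exploiting the invariance identity $S_\zeta |f| = |f|$ for every $\zeta$ on the unit circle --- a consequence of the inequality $S_\zeta |f| \ge |S_\zeta f| = |f|$ together with the analogous inequality for $S_{\bar\zeta}$ via the group structure of the JdLG minimal ideal --- and, if required, by averaging the operators $S_\zeta$ against Haar measure on the unit circle, so as to upgrade the per-time quasi-interiority into a structural positivity relation within $F_N$ that is compatible with the disjointness calculus.
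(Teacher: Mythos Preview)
Your approach shares the right starting point with the paper --- the JdLG machinery and, crucially, the positivity of all operators arising as weak-operator limits along $t\to\infty$ (your argument for this is essentially the paper's Lemma~\ref{lem:positive-K}). However, the gap you yourself flag in the final step is genuine, and your proposed repairs do not close it.

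The difficulty is structural. You work with the decomposition $g = g_+ - g_-$ taken \emph{in $E$} and try to combine $Q_N S g_- \ge Q_N g_+$ with the disjointness $Q_N g_+ \wedge Q_N g_- = 0$. But neither side of this disjointness relation is known to be quasi-interior: strong~\eqref{eq:LEP} gives quasi-interiority of $Q_N e^{tA}g_-$ only at each fixed large $t$, and this is lost under weak limits. Your suggested fixes do not help. The identity $S_\zeta|f|=|f|$ (which does hold once you place $S_\zeta$ in the minimal ideal $\mathcal{K}$ by composing with its unit $P$) gives no control on $Q_N g_-$ itself, and Haar-averaging the $S_\zeta$ over the circle yields a positive operator annihilating $f$, which again says nothing about quasi-interiority in $F_N$.

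The paper avoids this entirely by never passing quasi-interiority through a weak limit. It uses instead that the unit $P$ of $\mathcal{K}$ is a positive projection, so $F:=PE$ is itself a Banach lattice (with $x\wedge_F y = P(x\wedge y)$), and the restricted semigroup on $F$ is a group of lattice isomorphisms (each $e^{tA}|_F$ and its group inverse in $\mathcal{K}$ are positive). One then shows $\dim F\le 1$: if $x,y\in F_+\setminus\{0\}$ are disjoint \emph{in $F$}, the lattice-isomorphism property gives $P(e^{tA}x\wedge e^{tA}y)=0$ for every $t\ge 0$; since $Q_n\le I$ and $Q_n$ is a lattice homomorphism, this forces $P(Q_n e^{tA}x\wedge Q_n e^{tA}y)=0$. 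Now strong~\eqref{eq:LEP} is applied \emph{at a fixed large $t$}: both $Q_n e^{tA}x$ and $Q_n e^{tA}y$, hence their infimum, are quasi-interior in the closed sublattice generated by $Q_n E$, and a positive operator killing a quasi-interior point must vanish on that sublattice (Remark~\ref{rmk:quasi-int}). Thus $P Q_n = 0$ for all $n$, whence $P=0$. The essential point you are missing is to take the disjointness in $F$ rather than in $E$: on $F$ the semigroup preserves disjointness for all $t$, so the strong~\eqref{eq:LEP} hypothesis can be used pointwise in $t$ with no limiting procedure at all.
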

The overall strategy of the proof is similar to~\cite[Theorem 6.3.2]{GTh}. In our setting, however, the positivity properties of the semigroup are not `global', and thus some additional technical problems arise.

\subsection*{Jacobs-deLeeuw-Glicksberg decomposition}
\label{subsec:JdLG}
To prove Theorem~\ref{thm:strong-pos}, we use the Jacobs-deLeeuw-Glicksberg decomposition, which we review here briefly for the convenience of the reader. Firstly, if a semigroup $\mathcal{T}\subset\mathcal{L}(E)$ has relatively weakly compact orbits, then the semigroup operators form a relatively compact subset of $\mathcal{L}(E)$ in the weak operator topology (WOT); see~\cite[Section 2.4, Lemma 4.2]{Kr}. Let $\mathcal{S}$ denote the WOT-closure of $\mathcal{T}$. Then $\mathcal{S}$ is a compact, semitopological semigroup, and it is also abelian if $\mathcal{T}$ is abelian. Now we introduce the \emph{Sushkevich kernel}
\begin{equation}
	\mathcal{K} := \bigcap_{T\in\mathcal{S}} T\mathcal{S}.
\end{equation}
It is not difficult to show that this is a non-empty subset of $\mathcal{S}$, and is the intersection of all ideals in $\mathcal{S}$ (here, of course, we use the algebraic definition of an ideal). Consequently $\mathcal{K}$ is the unique minimal ideal of $\mathcal{S}$. Moreover, $\mathcal{K}$ is even a compact group, and its unit $P$ is a projection onto the closed linear span of the eigenvectors corresponding to the unimodular eigenvalues.

In the proofs below, we will assume without loss of generality that $s(A)=0$, and thus speak of $e^{tA}$ in place of $e^{t(A-s(A))}$. The semigroup $\mathcal{T}=\{e^{tA}\}_{t\ge 0}$ is of course abelian, and one checks that
\begin{equation}
\label{eq:K-kernel}
	\mathcal{K} = \bigcap_{T\in\mathcal{S}} T\mathcal{S} \subseteq \bigcap_{t\ge 0} \overline{\{e^{(t+s)A}:s \ge 0\}}^{\text{WOT}}.
\end{equation}
The unit element $P\in\mathcal{K}$ is the projection onto the closed linear span of the eigenvectors corresponding to the peripheral point spectrum $\sigma_p(A)\cap i\RR$. The range $PE$ is invariant under the semigroup $\mathcal{T}$ since $P$ commutes with each semigroup operator, and the restriction of the semigroup to $PE$ extends to a $C_0$-group, where for each $t>0$, the inverse $(e^{tA}|_{PE})^{-1}$ is given by ${R_t}|_{PE}$ for some $R_t\in\mathcal{K}$.  All the results just quoted can be found in~\cite[Section 2.4]{Kr}, where the general abstract theory is presented, or in~\cite[Section V.2]{EN00} where the presentation is adapted to operator semigroups.

\subsection*{Proof of Theorem~\ref{thm:strong-pos}}
We are now ready to give the proof of the main result. As a first step, we show that the Sushkevich kernel $\mathcal{K}$ comprises of positive operators. This is already known in the case of individually asymptotically positive semigroups, c.f.\ the argument in~\cite[p.\ 94]{GTh}. Note that the following lemma does not require the strong~\eqref{eq:LEP} property.
\begin{lemma}
\label{lem:positive-K}
	Let $E$ be a complex Banach lattice such that there exists an increasing sequence $(Q_n)_{n\in\NN}$ of positive operators converging strongly to the identity. Let $(e^{tA})_{t\ge 0}$ be a real $C_0$-semigroup on $E$ with relatively weakly compact orbits, and assume that the semigroup has the~\eqref{eq:LEP} property. Then the Sushkevich kernel, as defined in~\eqref{eq:K-kernel}, consists of positive operators.
\end{lemma}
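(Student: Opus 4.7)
The plan is to show that $Sf \in E_+$ for every $S \in \mathcal{K}$ and every $f \in E_+$. The only two facts I will need about the positive cone are that it is norm-closed and convex (hence weakly closed); everything else rests on the structural inclusion~\eqref{eq:K-kernel} and the~\eqref{eq:LEP} hypothesis applied to each $Q_n$. No use of the \emph{strong} LEP property or of any lattice-homomorphism property of $Q_n$ is needed, which is consistent with the statement.

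Fix $S \in \mathcal{K}$ and $f \in E_+$ (the case $f=0$ is trivial). I would proceed in three steps. First, by~\eqref{eq:K-kernel}, for every $t \ge 0$ the operator $S$ lies in the WOT-closure of $\{e^{(t+s)A} : s \ge 0\}$, so there is a net $(s_\alpha)_\alpha$ in $[0,\infty)$ (depending on $t$) with $e^{(t+s_\alpha)A} \to S$ in WOT; evaluating at $f$ gives weak convergence $e^{(t+s_\alpha)A} f \to Sf$. Second, fix $n \in \NN$. By~\eqref{eq:LEP} applied to the positive vector $f$, there exists $\tau_n > 0$ such that $Q_n e^{rA} f \ge 0$ for all $r \ge \tau_n$. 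Apply the previous step with $t = \tau_n$; since $Q_n$ is bounded and therefore weak-to-weak continuous,
\begin{equation*}
	Q_n e^{(\tau_n + s_\alpha)A} f \longrightarrow Q_n S f \quad \text{weakly.}
\end{equation*}
Each net element satisfies $\tau_n + s_\alpha \ge \tau_n$, so lies in $E_+$; since $E_+$ is convex and norm-closed, it is weakly closed, and we conclude $Q_n S f \in E_+$.

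Third, because $(Q_n)$ converges strongly to $I$, we have $Q_n S f \to S f$ in norm. Each $Q_n S f \in E_+$ and $E_+$ is norm-closed, so $S f \in E_+$. Since $f \in E_+$ was arbitrary, $S$ is positive, and this holds for every $S \in \mathcal{K}$. I do not anticipate a serious obstacle: the argument is purely topological and uses nothing beyond the defining properties, the inclusion~\eqref{eq:K-kernel}, and the two kinds of closedness of $E_+$. The only mild care needed is that the net $(s_\alpha)$ from the first step and the time $\tau_n$ from the second both depend on $n$ (and on $f$), but since $n$ is treated one at a time this causes no difficulty.
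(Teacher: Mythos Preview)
Your proof is correct and follows essentially the same approach as the paper's: use the inclusion~\eqref{eq:K-kernel} to approximate $S\in\mathcal{K}$ by semigroup operators with arbitrarily large times, apply~\eqref{eq:LEP} to get $Q_n Sf\ge 0$, and let $n\to\infty$. Your version is in fact a bit more explicit than the paper's, since you clearly exploit the freedom to take $t=\tau_n$ in~\eqref{eq:K-kernel} (the paper's single net argument leaves this step implicit), and you invoke the weak closedness of $E_+$ directly rather than testing against positive functionals---but these are cosmetic differences only.
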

\begin{proof}
Let $R\in\mathcal{K}$ be arbitrary. The inclusion in~\eqref{eq:K-kernel} shows that there exists a subnet of operators $(e^{t_j A})_{j\in J}$ (where $J$ is the $R$-dependent indexing set) such that $e^{t_j A} \overset{j}{\longrightarrow} R$ in the weak operator topology. Since the semigroup is real, so is the limit operator $R$. Let $f\in E_+\setminus\{0\}$ and $\varphi \in (E')_+\setminus\{0\}$ be arbitrary. Since $Q_n \to I$ strongly, for every $\varepsilon >0$ there is $n_0\in\NN$ such that
\begin{equation*}
	|\braket{\varphi, Q_n Rf}-\braket{\varphi,Rf}| < \varepsilon \quad\text{for all }n \ge n_0.
\end{equation*}
Therefore
\begin{align*}
	\braket{\varphi,Rf}+\varepsilon &> \braket{Q_n' \varphi, Rf} \\
	&= \lim_j \braket{Q_n'\varphi,e^{t_j A}f} = \lim_j \braket{\varphi,Q_n e^{t_j A}f} \ge 0
\end{align*}
for all $n\ge n_0$. As $\varepsilon>0$ was arbitrary, this shows that $\braket{\varphi, Rf}\ge 0$ for all $f\in E_+\setminus\{0\}$ and $\varphi \in (E')_+\setminus\{0\}$, hence $R$ is a positive operator.
\end{proof}

To complete the proof of Theorem~\ref{thm:strong-pos}, we will require more subtle facts about positive operators.
\begin{proof}[Completion of Proof of Theorem~\ref{thm:strong-pos}]
	As before, let $P$ denote the unit of the group $\mathcal{K}$. If $P=0$ (and hence $\mathcal{K}$ is the trivial group), then the peripheral point spectrum is empty and the conclusion follows immediately. Hence assume $P\ne 0$. By Lemma~\ref{lem:positive-K}, $P$ is a positive projection. We have $e^{tA}P\in\mathcal{K}$ for every $t\ge 0$ since $\mathcal{K}$ is an ideal in $\mathcal{S}$. As $P$ commutes with each operator $e^{tA}$, it follows that the semigroup $\mathcal{T}$ restricted to the range $F:=PE$ of $P$ is a \emph{positive} semigroup. As remarked in Subsection~\ref{subsec:JdLG}, the restricted semigroup $\mathcal{T}|_F$ extends to a $C_0$-group, where $(e^{tA}|_F)^{-1}=R_t|_F$ for some $R_t\in\mathcal{K}$. Now we recall the fact that if $T\in\mathcal{L}(E)$ is an invertible operator such that both $T$ and $T^{-1}$ are positive, then $T$ is a \emph{lattice isomorphism} (and the converse is also true) --- see~\cite[Theorem 2.15]{AB}. Therefore $\mathcal{T}|_F$ consists of lattice isomorphisms, which in particular are disjointness preserving operators (recall~\eqref{eq:disjoint-preserve}).
	
	Since $P$ is a positive projection, the range $F$ is a Banach lattice with the order inherited from $E$ and under a norm equivalent to that of $E$. For convenience, we write $\vee$ and $\wedge$ for the supremum and infimum in $E$ respectively. Then the supremum in $F$ is given by $x\vee_F y = P(x\vee y)$, and similarly for the infimum; see~\cite[Chapter III, Proposition 11.5]{Sch} for more details. We now show that $\dim F=1$. Assume for contradiction that $\dim F \ge 2$. Then there exist $0 \le x,y\in F$ such that $x\perp_F y=0$, i.e.\ $x,y$ are positive elements that are disjoint in $F$. In particular this means $P(x\wedge y)=0$. Since $\mathcal{T}|_F$ is a semigroup of positive and disjointness preserving operators, we have $e^{tA}x, e^{tA}y \ge 0$ and $P(e^{tA}x\wedge e^{tA}y)=0$ for all $t\ge 0$. Recall that we have a sequence $(Q_n)_{n\in\NN}$ of lattice homomorphisms increasing to the identity. For every $n\in\NN$, it follows that
	\begin{equation*}
		0 = P(e^{tA}x\wedge e^{tA}y) \ge PQ_n (e^{tA}x\wedge e^{tA}y) = P(Q_n e^{tA}x\wedge Q_n e^{tA}y) \ge 0,
	\end{equation*}
	hence $P(Q_n e^{tA}x\wedge Q_n e^{tA}y)=0$ for all $t\ge 0$. Since the semigroup $\mathcal{T}$ was assumed to have the strong~\eqref{eq:LEP} property, $Q_n e^{tA}x$ and $Q_n e^{tA}y$ are quasi-interior points in the closed sublattice generated by $Q_n E$, and so is $Q_n e^{tA}x\wedge Q_n e^{tA}y$ (see~\cite[Chapter II, Proposition 6.2]{Sch}). However, recalling Remark~\ref{rmk:quasi-int}, this implies $P=0$ on $Q_n E$. Since this is true for each $n\in\NN$, and $Q_n$ increases to the identity on $E$, we find that $P=0$ on $E$, a contradiction. Thus $\dim F=1$ as claimed.
	
	The assertion of Theorem~\ref{thm:strong-pos} is now obtained as follows: the restricted semigroup $\mathcal{T}|_F$ is bounded on the Banach lattice $F=PE$, and consists of lattice homomorphisms. Thus, by a classical result in the theory of positive operator semigroups~\cite[C-III, Theorem 2.10]{AGG}, the entire spectrum (not merely the \emph{point} spectrum) of the generator $A|_F$ is cyclic. However, $F$ is the range of the spectral projection associated to the peripheral point spectrum of $A$, and is one-dimensional. This implies $\sigma_p(A)\cap i\RR=\{0\}$, and the theorem is proved.
\end{proof}

As a straightforward consequence of Theorem~\ref{thm:strong-pos}, we obtain a result on strong convergence of semigroup orbits similar to~\cite[Corollary 2.2]{ArG21} but under slightly different positivity assumptions.
\begin{corollary}
	Let $E$ be a complex Banach lattice such that there exists an increasing sequence $(Q_n)_{n\in\NN}$ of lattice homomorphisms converging strongly to the identity. Let $(e^{tA})_{t\ge 0}$ be a real $C_0$-semigroup on $E$ such that $s(A)>-\infty$. If the semigroup $(e^{tA})_{t\ge 0}$ satisfies the strong~\eqref{eq:LEP} property with respect to $(Q_n)_{n\in\NN}$, then the following assertions are equivalent:
	\begin{enumerate}[\upshape(i)]
		\item The limit $\lim_{t\to\infty} e^{tA}f$ exists for every $f\in E$.
		\item For each $f\in E$, the orbit $\{e^{tA}f : t \ge 0\}$ is relatively (strongly) compact in $E$.
	\end{enumerate}
\end{corollary}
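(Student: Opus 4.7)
The plan is as follows.

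Implication (i) $\Rightarrow$ (ii) is routine: if $e^{tA}f \to g$ as $t \to \infty$, then extending the orbit map $t \mapsto e^{tA}f$ by the value $g$ at $t = \infty$ produces a continuous map from the compact space $[0,\infty]$ into $E$, whose image is compact and contains $\{e^{tA}f : t \ge 0\}$.

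For the substantive direction (ii) $\Rightarrow$ (i), the plan is to combine Theorem~\ref{thm:strong-pos} with the Jacobs--deLeeuw--Glicksberg decomposition recalled in Subsection~\ref{subsec:JdLG}. First, pointwise relative compactness of orbits together with the uniform boundedness principle yields $\sup_{t \ge 0}\|e^{tA}\| < \infty$, and in particular $s(A) \le 0$. I would then establish $\sigma_p(A) \cap i\RR \subseteq \{0\}$ by a case distinction on $s(A)$: if $s(A) < 0$ the conclusion is immediate since every eigenvalue is a spectral value, while if $s(A) = 0$ the rescaled semigroup appearing in Theorem~\ref{thm:strong-pos} coincides with $(e^{tA})$ itself, whose orbits are relatively strongly (and hence weakly) compact, so that Theorem~\ref{thm:strong-pos} supplies exactly the desired spectral inclusion.

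With these preparations, the JdLG decomposition applies to the bounded, relatively weakly compact semigroup, producing a projection $P$ (the unit of the Sushkevich kernel $\mathcal K$) whose range is the closed linear span of eigenvectors of $A$ corresponding to eigenvalues on $i\RR$; by the previous step this range reduces to $\ker A$, on which $e^{tA}$ acts as the identity, so the orbit of each $f \in PE$ is constantly $f = Pf$. On the complementary subspace $(I-P)E$, the restricted semigroup is bounded, has relatively strongly compact orbits, and its generator has no eigenvalue on $i\RR$ (any such eigenvector would belong to $PE$). A standard consequence of JdLG for such semigroups (compare \cite[Section 2.4]{Kr} or \cite[Section V.2]{EN00}) then gives $e^{tA}f \to 0$ in norm for every $f \in (I-P)E$, and combining the two parts yields $e^{tA}f \to Pf$ in norm for every $f \in E$.

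The main obstacle will be the case distinction used to secure $\sigma_p(A) \cap i\RR \subseteq \{0\}$: Theorem~\ref{thm:strong-pos} as stated requires the rescaled semigroup $(e^{t(A-s(A))})_{t \ge 0}$ to have relatively weakly compact orbits, a condition that transfers readily from (ii) only when $s(A) = 0$; fortunately when $s(A) < 0$ the spectral conclusion follows directly from the definition of $s(A)$ and does not require Theorem~\ref{thm:strong-pos} at all.
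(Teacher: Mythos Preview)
Your proposal is correct and follows essentially the same route as the paper: the easy direction (i) $\Rightarrow$ (ii), boundedness of the semigroup to obtain $s(A)\le 0$, the case distinction on whether $s(A)<0$ or $s(A)=0$ (invoking Theorem~\ref{thm:strong-pos} only in the latter case), and the Jacobs--deLeeuw--Glicksberg splitting $E=E_s\oplus E_r$ to conclude convergence. Your write-up is somewhat more explicit about the limit being $Pf$ and about the behaviour on $\ker A$, but the argument is the same.
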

\begin{proof}
	Observe firstly that condition (i) implies (ii). If (ii) holds, then the semigroup orbits are \emph{a fortiori} relatively weakly compact. Under both the stated conditions, the semigroup is bounded, and thus $s(A)\le0$. The Jacobs-deLeeuw-Glicksberg decomposition as described in~\cite[Theorem V.2.14]{EN00} yields $E=E_s \oplus E_r$, where $E_s = \{f\in E: \lim_{t\to\infty}e^{tA}f = 0\}$ and $E_r$ is the closed linear span of eigenvectors corresponding to $\sigma_p(A)\cap i\RR$. The conclusion (ii)$\Rightarrow$(i) is therefore immediate if $s(A)<0$ (and we even have $e^{tA}f \to 0$ as $t\to\infty$ for all $f\in E$). If $s(A)=0$, then Theorem~\ref{thm:strong-pos} shows that the only eigenvalue of $A$ on the imaginary axis is 0, and (i) follows as well.
\end{proof}

\section{Stability of locally eventually positive semigroups}
\label{sec:stability}

The final section of this paper is concerned with the spectral bound of the generator of semigroups satisfying the~\eqref{eq:LEP} property. In the study of stability of semigroups, it is a natural question to ask if the spectral bound $s(A)$ of the generator belongs to the spectrum (provided that $\sigma(A)\ne\emptyset$ of course). It is well-known that this is true in the case that the operator $A$ generates a \emph{positive} semigroup on a Banach lattice (actually, this remains true on more general ordered Banach spaces, see~\cite[Theorem 5.3.1]{ABHN}). Let us briefly recall the proof of this classic result, as it is relevant to the forthcoming Theorem~\ref{thm:spectral-bound}. It is a non-trivial fact that if $A$ generates a positive $C_0$-semigroup on a Banach lattice, then the Laplace transform representation of the resolvent
\begin{equation*}
	R(\lambda,A)f = \int_0^\infty e^{-\lambda t}e^{tA}f \,dt \qquad\text{for all }f\in E
\end{equation*}
is valid as an improper Riemann integral for all $\Re\lambda >s(A)$, which is a larger domain of convergence than can be expected for a general $C_0$-semigroup. We refer to~\cite[Theorem 12.7]{BFR} for a direct proof. Using the positivity of the semigroup, a simple calculation then yields
\begin{equation}
\label{eq:resolvent-est}
	|R(\lambda,A)f| \le R(\Re\lambda,A)|f| \qquad\text{for all }f\in E\text{ and }\Re\lambda >s(A),
\end{equation}
and from the above inequality, it is straightforward to obtain the conclusion $s(A)\in\sigma(A)$---see for example~\cite[Corollary 12.9]{BFR} for the details.

The Laplace transform representation of the resolvent was shown to hold for individually eventually positive $C_0$-semigroups on Banach lattices in~\cite[Proposition 7.1]{DGK1}. In place of inequality~\eqref{eq:resolvent-est}, we have instead the weaker result
\begin{equation}
	\label{eq:resolvent-remainder}
	|R(\lambda,A)f| \le R(\Re\lambda,A)|f| + r_f(\Re\lambda) \qquad\text{for all }\Re\lambda >s(A),
\end{equation}
where $r_f : (s(A),\infty) \to E$ is a `remainder term' that is norm-bounded as $\Re\lambda\downarrow s(A)$ --- see~\cite[Lemma 7.4]{DGK1} for the details. Nevertheless, inequality~\eqref{eq:resolvent-remainder} is sufficient to show that $s(A)\in\sigma(A)$ remains true for individually eventually positive $C_0$-semigroups, see~\cite[Theorem 7.6]{DGK1}. In Theorem~\ref{thm:spectral-bound} below, we will adopt the same strategy to show that $s(A)\in\sigma(A)$ holds for semigroups satisfying the~\eqref{eq:LEP} condition and an additional assumption on the essential spectrum of the generator.

As a first step, we show that~\cite[Theorem 1.5.3]{ABHN} remains valid for eventually positive functions on $[0,\infty)$ with values in an ordered Banach space with \emph{normal positive cone} (see~\cite[Appendix C]{ABHN} for the definitions). Here it suffices to note that all Banach lattices have normal positive cone. For locally Bochner-integrable functions, we use the notation $\abs(u)$ for the \emph{abscissa of convergence} and $\hol(\hat{u})$ for the \emph{abscissa of holomorphy} of the Laplace transform $\hat{u}$, as defined respectively on p.~27 and p.~33 of~\cite{ABHN}.
\begin{lemma}
	\label{lem:hol=abs}
	Let $X$ be an ordered Banach space with normal positive cone. Let $u\in L^1_{\mathrm{loc}}([0,\infty);X)$ satisfy $\abs(u)<\infty$, and suppose that there exists $\tau\ge 0$ such that $u(t)\ge 0$ for a.e.\ $t\ge\tau$. Then $\hol(\hat{u})=\abs(u)$, and if $\abs(u)>-\infty$, then $\hat{u}$ has a singularity at $\abs(u)$.
\end{lemma}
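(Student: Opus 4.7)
The plan is to reduce the statement to the classical Theorem 1.5.3 of~\cite{ABHN} for \emph{everywhere} positive functions, by splitting off a compactly supported piece from $u$.

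First I would write $u = u_1 + u_2$, where $u_1 := u\,\chi_{[0,\tau]}$ and $u_2 := u\,\chi_{(\tau,\infty)}$. Since $u_1$ has compact support in $[0,\tau]$ and $u\in L^1_{\mathrm{loc}}([0,\infty);X)$, the Laplace transform $\hat{u}_1(\lambda)=\int_0^\tau e^{-\lambda t}u_1(t)\,dt$ defines an entire function of $\lambda$; in particular $\abs(u_1)=\hol(\hat{u}_1)=-\infty$. By hypothesis $u_2(t)\ge 0$ for a.e.\ $t\ge 0$, so $u_2$ is a positive $X$-valued locally integrable function on $[0,\infty)$.

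Next, since $X$ has a normal positive cone, the classical theorem~\cite[Theorem 1.5.3]{ABHN} applies directly to $u_2$ and yields $\hol(\hat{u}_2)=\abs(u_2)$, with $\hat{u}_2$ having a singularity at $\abs(u_2)$ whenever $\abs(u_2)>-\infty$. Because $u_1$ has compact support, the abscissae and holomorphy abscissae of $u$ and $u_2$ agree: $\abs(u)=\abs(u_2)$ is immediate from the definition of the abscissa of (absolute) convergence, and $\hol(\hat{u})=\hol(\hat{u}_2)$ follows because $\hat{u}=\hat{u}_1+\hat{u}_2$ holds on the common half-plane $\{\Re\lambda>\abs(u_2)\}$, and any holomorphic extension of $\hat{u}$ past $\abs(u_2)$ would, by subtracting the entire function $\hat{u}_1$, produce a holomorphic extension of $\hat{u}_2$ past $\abs(u_2)$, contradicting the singularity obtained from the classical theorem. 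Combining these equalities gives $\hol(\hat{u})=\abs(u)$, and the same subtraction argument shows that the singularity of $\hat{u}_2$ at $\abs(u_2)$ persists as a singularity of $\hat{u}$ at $\abs(u)$.

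There is no real obstacle here beyond bookkeeping: the substantive content lies entirely in the classical ABHN theorem, and the role of local eventual positivity is only to ensure that after discarding a compactly supported remainder the remaining function is genuinely positive. The only minor point to keep in mind is to justify $\abs(u)=\abs(u_2)$ and that $\hat{u}=\hat{u}_1+\hat{u}_2$ on the appropriate half-plane, both of which are immediate from the linearity of the Laplace transform and the fact that $e^{-\lambda t}u_1(t)$ is integrable on $[0,\tau]$ for every $\lambda\in\mathbb{C}$.
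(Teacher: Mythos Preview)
Your proposal is correct and follows essentially the same approach as the paper: both arguments split off the contribution from $[0,\tau]$ as an entire function and apply \cite[Theorem 1.5.3]{ABHN} to the remaining positive part. The paper uses the translate $v(t)=u(t+\tau)$ and the identity $\hat{u}(\lambda)=e^{-\lambda\tau}\hat{v}(\lambda)+\int_0^\tau e^{-\lambda t}u(t)\,dt$, whereas you use the truncation $u_2=u\chi_{(\tau,\infty)}$ and $\hat{u}=\hat{u}_1+\hat{u}_2$; since $\hat{u}_2(\lambda)=e^{-\lambda\tau}\hat{v}(\lambda)$, the two decompositions are equivalent up to a nowhere-vanishing entire factor.
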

\begin{proof}
	Define $v(t):=u(t+\tau)$ for all $t\ge 0$, so that $v(t)\ge 0$ for a.e.\ $t\ge 0$. By~\cite[Theorem 1.5.3]{ABHN}, it holds that $\hol(\hat{v})=\abs(v)$. On the other hand, a simple computation shows that
	\begin{equation}
		\hat{u}(\lambda) = e^{-\lambda\tau} \hat{v}(\lambda) + \int_0^\tau e^{-\lambda t}u(t)\,dt.
	\end{equation}
	The above formula shows that $\hat{u}$ converges if and only if $\hat{v}$ converges, and likewise $\hat{u}$ and $\hat{v}$ have the same domain of holomorphy. Therefore $\hol(\hat{u})=\hol(\hat{v})=\abs(v)=\abs(u)$.
\end{proof}

An essential ingredient for the proof of Theorem~\ref{thm:spectral-bound} is the following Laplace transform result. It is derived similarly to~\cite[Proposition 6.1]{Ar21}; however, we include a proof since our statement differs from the mentioned reference.
\begin{proposition}
	\label{prop:hol=abs}
	Let $E$ be a complex Banach lattice, and $T:E\to E$ a positive operator. Suppose that $(e^{tA})_{t\ge 0}$ is a $C_0$-semigroup on $E$ such that $[0,\infty) \ni t \mapsto u_f(t):=Te^{tA}f\in E$ is an eventually positive function for every $f\in E_+$. Then, for every $f\in E$, the formula
	\begin{equation}
		\label{eq:resolvent-integral}
		T R(\lambda,A)f = \int_0^\infty e^{-\lambda t}Te^{tA}f\,dt
	\end{equation}
	holds for all $\Re\lambda>\abs(u_f)$, where the integral converges as an improper Riemann integral. Moreover, $\abs(u_f)\in\sigma(A)$ and in particular, $\abs(u_f)\le s(A)$.
\end{proposition}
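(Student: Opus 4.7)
The plan is to exploit Lemma~\ref{lem:hol=abs} applied to the orbit $u_f(t)=Te^{tA}f$, bootstrapping from the classical Laplace representation of the resolvent. For $\Re\lambda$ larger than the growth bound $\omega_0(A)$ of the semigroup, the standard identity $R(\lambda,A)f=\int_0^\infty e^{-\lambda t}e^{tA}f\,dt$ holds as a Bochner integral; applying the bounded operator $T$ under the integral sign yields
\begin{equation*}
    T R(\lambda,A)f = \int_0^\infty e^{-\lambda t}Te^{tA}f\,dt = \hat{u}_f(\lambda)
\end{equation*}
for all $\Re\lambda>\omega_0(A)$. In particular $\abs(u_f)\le \omega_0(A)<\infty$, and on their common domain the two holomorphic functions $\lambda\mapsto TR(\lambda,A)f$ (defined on $\rho(A)$) and $\hat{u}_f$ (defined on $\{\Re\lambda>\abs(u_f)\}$) coincide.

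For $f\in E_+$, the orbit $u_f$ is eventually positive by hypothesis, so Lemma~\ref{lem:hol=abs} gives $\hol(\hat{u}_f)=\abs(u_f)$ together with the fact that $\hat{u}_f$ has a genuine singularity at $\abs(u_f)$ whenever this is finite. Since $TR(\cdot,A)f$ is holomorphic on all of $\rho(A)\supseteq\{\Re\lambda>s(A)\}$ and agrees with $\hat{u}_f$ on the smaller half-plane $\{\Re\lambda>\omega_0(A)\}$, the identity theorem shows that $TR(\cdot,A)f$ is the holomorphic continuation of $\hat{u}_f$ to $\{\Re\lambda>s(A)\}$; consequently $\hol(\hat{u}_f)\le s(A)$, and therefore $\abs(u_f)\le s(A)$. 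If $\abs(u_f)$ lay in $\rho(A)$, then $TR(\cdot,A)f$ would furnish a holomorphic continuation of $\hat{u}_f$ across $\abs(u_f)$, contradicting the singularity guaranteed by Lemma~\ref{lem:hol=abs}; hence $\abs(u_f)\in\sigma(A)$.

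To obtain the integral formula for arbitrary $f\in E$, I would use the complex lattice decomposition $f=(f_1-f_2)+i(f_3-f_4)$ with $f_j\in E_+$, apply the identity to each $u_{f_j}$, and recombine by linearity. This establishes the formula on the intersection half-plane $\{\Re\lambda>\max_j\abs(u_{f_j})\}$. Since the Laplace integral of $u_f$ already converges on the (possibly larger) half-plane $\{\Re\lambda>\abs(u_f)\}$ by definition of the abscissa of convergence and defines a holomorphic function there, the equality with $TR(\lambda,A)f$ propagates to $\{\Re\lambda>\abs(u_f)\}\cap\rho(A)$ by the identity theorem.

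The main obstacle is precisely the improvement from Bochner convergence on $\{\Re\lambda>\omega_0(A)\}$ to improper Riemann convergence on the potentially much larger half-plane $\{\Re\lambda>\abs(u_f)\}$. For a general $E$-valued function one only has $\hol(\hat{u}_f)\le \abs(u_f)$, and this inequality can be strict, so bare analytic continuation from $TR(\cdot,A)f$ would only yield information about $\hol(\hat{u}_f)$, not $\abs(u_f)$. Lemma~\ref{lem:hol=abs} is exactly the ingredient that closes this gap via the eventual positivity of $u_f$; once equality of the two abscissas is in hand, the remainder of the argument is routine bookkeeping between the various half-planes of convergence.
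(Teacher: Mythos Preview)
Your proposal is correct and follows essentially the same route as the paper's proof: both start from the standard Laplace representation on $\{\Re\lambda>\omega_0\}$, push $T$ through the integral by boundedness, invoke Lemma~\ref{lem:hol=abs} for $f\in E_+$ to get $\hol(\hat u_f)=\abs(u_f)$ together with the singularity at $\abs(u_f)$, and then use the identity theorem against $TR(\cdot,A)f$ to conclude $\abs(u_f)\in\sigma(A)$ and $\abs(u_f)\le s(A)$. The paper phrases the last step as ``$\abs(u_f)$ is a singularity of $TR(\cdot,A)f$, hence of $R(\cdot,A)f$'' rather than your contrapositive formulation via $\rho(A)$, but the content is identical. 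Your explicit treatment of general $f$ via the decomposition $f=(f_1-f_2)+i(f_3-f_4)$ is exactly what the paper means by ``it suffices to prove the proposition for $f\in E_+$''.
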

\begin{proof}
	It is well-known that a positive operator $T:E\to E$ on a Banach lattice $E$ is automatically bounded (see for instance~\cite[Theorem 4.3]{AB}). Thus the formula~\eqref{eq:resolvent-integral} holds for all $\Re\lambda>\omega_0$, where $\omega_0\in\RR$ is the growth bound of the semigroup. As $E_\RR$ spans $E$, and the positive cone of a Banach lattice is generating (i.e.\ $E=E_+-E_+$), therefore it suffices to prove the proposition for $f\in E_+$. 
	
	Given a positive vector $f$, the function $u_f(t)=Te^{tA}f$ is eventually positive by assumption. Since we have $\hol(u_f)=\abs(u_f)$ by Lemma~\ref{lem:hol=abs}, it follows from the identity theorem for analytic functions that the formula~\eqref{eq:resolvent-integral} holds for all $\Re\lambda>\abs(u_f)$. Moreover, by Lemma~\ref{lem:hol=abs} again, $\abs(u_f)$ is a singularity of the Laplace transform $\widehat{u_f}$. Formula~\eqref{eq:resolvent-integral} yields that $\abs(u_f)$ is a singularity of the function $\lambda\mapsto TR(\lambda,A)f$, and since $T$ is a bounded operator, this shows that $R(\cdot,A)f$ has a singularity at $\abs(u_f)$ as well. Therefore $\abs(u_n)\in\sigma(A)$ and $\abs(u_f)\le s(A)$ follows immediately.
\end{proof}

Before stating the main theorem, we recall some further definitions. Let $A$ be a densely-defined closed linear operator on a Banach space $E$. The \emph{nullity} of $A$ is $\mathrm{nul}(A):=\dim\ker(A)$, and the \emph{deficiency} of $A$ is $\mathrm{def}(A):=\codim\rg(A)=\dim(E/\rg A)$. Recall that $A$ is called \emph{Fredholm} if its nullity and deficiency are both finite. Then we can define the \emph{Fredholm domain} $\Delta_F(A)$ and the \emph{essential spectrum} $\sigma_{\mathrm{ess}}(A)$ of $A$ respectively by
\begin{equation}
	\Delta_F(A):=\{\lambda\in\CC: A-\lambda I \text{ is Fredholm}\}, \qquad \sigma_{\mathrm{ess}}(A):= \CC\setminus\Delta_F(A).
\end{equation}
This leads to the natural definition of the \emph{essential spectral bound}:
\begin{equation}
	s_{\mathrm{ess}}(A):=\sup\{\Re\lambda:\lambda\in\sigma_{\mathrm{ess}}(A)\} \in [-\infty,\infty].
\end{equation}
Note that $\sigma_{\mathrm{ess}}(A)\subseteq\sigma(A)$, and hence $s_{\mathrm{ess}}(A)\le s(A)$. By~\cite[Theorem IV.5.28]{Kato}, every isolated spectral value of $A$ with finite algebraic multiplicity belongs to $\Delta_F(A)$.

We can now prove the main result of this section.
\begin{theorem}
	\label{thm:spectral-bound}
	Let $(e^{tA})_{t\ge 0}$ be a $C_0$-semigroup on a complex Banach lattice $E$, satisfying the~\eqref{eq:LEP} property with respect to an increasing sequence $(P_n)_{n\in\NN}$ of positive operators. If $s_{\mathrm{ess}}(A)<s(A)$, then $s(A)\in\sigma(A)$.
\end{theorem}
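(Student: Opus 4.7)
The plan is to apply Proposition~\ref{prop:hol=abs} with $T = P_n$ for each $n\in\NN$ to produce a family of real spectral values of $A$, and then to show that the supremum of these values equals $s(A)$. Writing $u_{n,f}(t) := P_n e^{tA}f$, the~\eqref{eq:LEP} property guarantees that $u_{n,f}$ is eventually positive for every $f\in E_+$. Consequently, Proposition~\ref{prop:hol=abs} yields $\abs(u_{n,f})\in\sigma(A)$ and $\abs(u_{n,f})\le s(A)$ for all $n\in\NN$ and all $f\in E_+$. Define
\begin{equation*}
	\alpha := \sup\{\abs(u_{n,f}) : n\in\NN,\ f\in E_+\} \le s(A).
\end{equation*}
Since $\sigma(A)$ is closed and each $\abs(u_{n,f})$ is real, once $\alpha = s(A)$ is established, $s(A)\in\sigma(A)$ follows immediately.

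To show $\alpha = s(A)$ I would argue by contradiction, so suppose $\alpha < s(A)$. By~\cite[Theorem IV.5.28]{Kato}, the hypothesis $s_{\mathrm{ess}}(A)<s(A)$ implies that every spectral value of $A$ in the half-plane $\{\Re\lambda > s_{\mathrm{ess}}(A)\}$ is an isolated eigenvalue of finite algebraic multiplicity. Because $\max(\alpha, s_{\mathrm{ess}}(A)) < s(A) = \sup\Re\sigma(A)$, there exists $\mu\in\sigma(A)$ with $\Re\mu > \max(\alpha, s_{\mathrm{ess}}(A))$; such a $\mu$ is then an isolated eigenvalue, so I select an eigenvector $f\in E\setminus\{0\}$ with $Af = \mu f$. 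Then $R(\lambda,A)f = (\lambda-\mu)^{-1}f$ on $\rho(A)$, and hence $P_n R(\lambda,A)f = (\lambda-\mu)^{-1}P_n f$. Since $P_n f\to f \ne 0$ strongly, we have $P_n f\ne 0$ for all sufficiently large $n$, so $\lambda\mapsto P_n R(\lambda,A)f$ has a genuine simple pole at $\mu$. On the other hand, decomposing $f = (f_1-f_2) + i(f_3-f_4)$ with $f_j\in E_+$ and applying Proposition~\ref{prop:hol=abs} to each $f_j$ together with linearity, the Laplace integral $\int_0^\infty e^{-\lambda t}P_n e^{tA}f\,dt$ converges and defines a holomorphic function on $\{\Re\lambda > \max_j \abs(u_{n,f_j})\}\supseteq\{\Re\lambda > \alpha\}$, which agrees with $P_n R(\lambda,A)f$ on the intersection with $\rho(A)$. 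Since $\Re\mu > \alpha$, this holomorphic extension takes a finite value at $\mu$, contradicting the blow-up of $(\lambda-\mu)^{-1}P_n f$ as $\lambda\to\mu$. Hence $\alpha = s(A)$, and the theorem follows.

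The decisive step is the appeal to $s_{\mathrm{ess}}(A)<s(A)$, used precisely to extract an isolated eigenvalue with real part exceeding $\alpha$, which then supplies the genuine eigenvector whose pole obstructs the localised Laplace transform extension. Without such essential-spectrum control, spectral values near $s(A)$ could all lie in the essential spectrum, across which the localised resolvents $P_n R(\cdot,A)f$ may very well extend analytically and the strategy collapses. A secondary technical issue is that Proposition~\ref{prop:hol=abs} gives $\abs(u_{n,f})\in\sigma(A)$ only for $f\in E_+$, whereas the eigenvector at a non-real $\mu$ need not be positive; the standard decomposition of a complex vector into four positive parts, combined with linearity, handles this point routinely.
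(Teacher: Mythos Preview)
Your proof is correct and follows a somewhat different path from the paper's. Both arguments rest on Proposition~\ref{prop:hol=abs}, but the paper first derives a localised resolvent inequality
\[
|P_n R(\lambda,A)f| \le P_n R(\Re\lambda,A)|f| + P_n r_{n,f}(\Re\lambda)
\]
with a bounded remainder, and then compares the behaviour at a pole $\lambda_0$ with the regular behaviour at the real point $\Re\lambda_0\in\rho(A)$ via the leading Laurent coefficient $Q_{-k}$, concluding $P_n Q_{-k}=0$ for all $n$. You instead work directly with the holomorphic extension furnished by the Laplace integral and with a specific eigenvector, exploiting the explicit formula $P_n R(\lambda,A)f=(\lambda-\mu)^{-1}P_n f$. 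Your route is more streamlined: it avoids the remainder term entirely and sidesteps the paper's need to arrange $[\Re\lambda_0,\infty)\subset\rho(A)$. The paper's approach, on the other hand, parallels the classical proof for positive semigroups via~\eqref{eq:resolvent-est}, and the resolvent inequality it establishes is of independent use---the paper recycles it immediately afterwards to bound pole orders on the peripheral spectrum. One small point worth making explicit in your write-up: the claim that the Laplace integral agrees with $P_n R(\lambda,A)f$ near $\mu$ relies on the identity theorem and hence on connectedness; this is in order because in the half-plane $\{\Re\lambda>\max(\alpha,s_{\mathrm{ess}}(A))\}$ the spectrum is discrete, so a punctured neighbourhood of $\mu$ lies in the same component of $\rho(A)$ as $\{\Re\lambda>\omega_0\}$.
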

\begin{proof}
	Assume for contradiction that $s(A)\not\in\sigma(A)$. We fix $n\in\NN$ and let $f\in E_\RR$ be arbitrary. By the~\eqref{eq:LEP} property, we can choose $\tau_n\ge 0$ sufficiently large so that $P_n e^{tA}f^+\ge 0$ and $P_n e^{tA}f^-\ge 0$ for all $t\ge\tau_n$. Then
	\begin{equation*}
		|P_n e^{tA}f|=|P_n e^{tA}(f^+ - f^-)|\le |P_n e^{tA}f^+|+|P_n e^{tA}f^-| = P_n e^{tA}|f|
	\end{equation*}
	for all $t\ge\tau_n$. From the above inequality, we deduce
	\begin{align*}
		\left|\int_0^T e^{-\lambda t}P_n e^{tA}f\,dt\right| &\le \int_0^T e^{-(\Re\lambda)t}|P_n e^{tA}f|\,dt \\
		&\le \int_0^{\tau_n} e^{-(\Re\lambda)t}(|P_n e^{tA}f|-P_n e^{tA}|f|)\,dt + \int_0^T e^{-(\Re\lambda)t}P_n e^{tA}|f|\,dt \\
		&\le P_n \int_0^{\tau_n} e^{-(\Re\lambda)t}(|e^{tA}f|-e^{tA}|f|)\,dt + \int_0^T e^{-(\Re\lambda)t}P_n e^{tA}|f|\,dt.
	\end{align*}
	for all $T>\tau_n$ and $\Re\lambda >\abs(u_n)$, where $u_n(t):=P_n e^{tA}f$. Note that the positivity of $P_n$ was used in the last inequality. If we let $T\to\infty$, then by formula~\eqref{eq:resolvent-integral} we obtain
	\begin{equation}
		\label{eq:Pn-resolvent-est}
		|P_n R(\lambda,A)f| \le P_n R(\Re\lambda,A)|f| + P_n r_{n,f}(\Re\lambda) \qquad\text{for all }\Re\lambda >\abs(u_n),
	\end{equation}
	where $r_{n,f}$ is defined by
	\begin{equation}
		\label{eq:rnf}
		r_{n,f}(\sigma):=\int_0^{\tau_n} e^{-\sigma t}(|e^{tA}f|-e^{tA}|f|)\,dt \qquad\text{for all }\sigma\in\RR.
	\end{equation}

	Proposition~\ref{prop:hol=abs} and the assumption that $s(A)\not\in\sigma(A)$ together imply the strict inequality $\abs(u_n)<s(A)$, and thus $\max\{\abs(u_n), s_{\mathrm{ess}}(A)\}=:\omega<s(A)$. As $\sigma(A)$ is a closed subset of $\CC$, we deduce that there exists $\lambda_0\in\sigma(A)$ of the form $\lambda_0=\delta+i\beta$ where $\delta\in (\omega, s(A))$, $[\delta,\infty)$ belongs to the resolvent set $\varrho(A)$, and $\beta\in\RR\setminus\{0\}$. From the discussion preceding the proof, $\lambda_0$ is an isolated spectral value, and hence it is a pole of the resolvent of some order $k\in\NN$. Let $Q_{-k}$ denote the coefficient of the Laurent expansion of the resolvent about $\lambda_0$.
	
	Now consider $\lambda=\lambda_0+\varepsilon$ for $\varepsilon >0$. We obtain
	\begin{equation}
		\label{eq:lambda-ibeta}
		|(\lambda-\lambda_0)^kP_n R(\lambda,A)f| = \varepsilon^k |P_n R(\lambda,A)f| \le \varepsilon^k P_n[R(\delta+\varepsilon,A)|f|+r_{n,f}(\delta+\varepsilon)]
	\end{equation}
	from inequality~\eqref{eq:Pn-resolvent-est}. As $\varepsilon\downarrow 0$, we have
	\begin{equation*}
		\varepsilon^k P_n R(\lambda,A)f \longrightarrow P_n Q_{-k}f \quad\text{and}\quad \varepsilon^k P_n R(\delta+\varepsilon,A)|f| \longrightarrow 0
	\end{equation*}
	in norm, since $\lambda_0$ is a pole of order $k$, while $\delta=\Re\lambda_0 \in\varrho(A)$. Furthermore, $r_{n,f}(\delta+\varepsilon)$ remains norm-bounded (with $n$ fixed) as $\varepsilon\to 0$. Thus the right-hand side of~\eqref{eq:lambda-ibeta} vanishes as $\varepsilon\downarrow 0$, implying that $P_n Q_{-k}=0$ on $E$, since $f\in E_\RR$ was arbitrary. The above arguments hold for each $n\in\NN$, and consequently we find that $P_n Q_{-k}=0$ for all $n\in\NN$. Recalling that $P_n$ converges strongly to the identity, we conclude $Q_{-k}=0$, a contradiction.
\end{proof}

\begin{remark}
	It would be interesting to see if Theorem~\ref{thm:spectral-bound} remains true without the assumption $s_{\mathrm{ess}}(A)<s(A)$, i.e.\ we impose only the obvious condition $s(A)>-\infty$. We leave this as an open problem.
\end{remark}

Under the assumptions of Theorem~\ref{thm:spectral-bound}, we obtain the following analogue of~\cite[Theorem 7.7]{DGK1} for LEP semigroups.
\begin{theorem}
	Let $(e^{tA})_{t\ge 0}$ be a $C_0$-semigroup on a complex Banach lattice $E$, satisfying the~\eqref{eq:LEP} property with respect to an increasing sequence $(P_n)_{n\in\NN}$ of positive operators. If $s_{\mathrm{ess}}(A)<s(A)$, then the following assertions hold:
	\begin{enumerate}[\upshape (i)]
		\item $s(A)$ is a pole of the resolvent $R(\cdot, A)$ of some order $m\in\NN$, and an eigenvalue of $A$ with a positive eigenvector.
		\item Every pole of the resolvent $R(\cdot, A)$ in the peripheral spectrum $\sigma_{\mathrm{per}}(A)$ has order at most $m$.
	\end{enumerate}
\end{theorem}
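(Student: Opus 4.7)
The plan is to combine Theorem~\ref{thm:spectral-bound} with Fredholm perturbation theory to pin down the pole structure at $s(A)$, and then to re-use the master inequality from the proof of that theorem, with suitable power-law scaling, to extract residue-level information. Theorem~\ref{thm:spectral-bound} gives $s(A)\in\sigma(A)$, while $s_{\mathrm{ess}}(A)<s(A)$ places $s(A)$ in the Fredholm domain $\Delta_F(A)$. Since $s(A)$ is connected to the half-line $\{\Re\lambda>s(A)\}\subset\varrho(A)$ inside a single component of $\Delta_F(A)$, standard theory (cf.~\cite[Theorem IV.5.33]{Kato}) forces $s(A)$ to be an isolated eigenvalue of finite algebraic multiplicity, hence a pole of the resolvent of some finite order $m\in\NN$. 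This settles the spectral part of (i).

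For the positivity of the eigenvector at $s(A)$, rescale so that $s(A)=0$ and revisit the inequality
\begin{equation*}
|P_n R(\lambda,A)f|\le P_n R(\Re\lambda,A)|f|+P_n r_{n,f}(\Re\lambda)
\end{equation*}
from the proof of Theorem~\ref{thm:spectral-bound}, valid for $\Re\lambda>\abs(u_n)$ with $r_{n,f}$ norm-bounded as $\Re\lambda\downarrow 0$. Specialising to real $\lambda=\varepsilon>0$ and $f\in E_+$, so that $|f|=f$, I multiply through by $\varepsilon^m$ and let $\varepsilon\downarrow 0$. The Laurent expansion of $R(\cdot,A)$ at $0$ gives $\varepsilon^m R(\varepsilon,A)\to Q_{-m}$ in operator norm, where $Q_{-m}\ne 0$ is the leading Laurent coefficient and $\rg(Q_{-m})\subseteq\ker A$; combined with $\varepsilon^m P_n r_{n,f}(\varepsilon)\to 0$, the inequality passes to the limit $|P_n Q_{-m}f|\le P_n Q_{-m}f$, which forces $P_n Q_{-m}f\ge 0$. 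Since $P_n\to I$ strongly and the positive cone is norm-closed, $Q_{-m}f\ge 0$ for every $f\in E_+$. If $Q_{-m}$ vanished on $E_+$ it would vanish on all of $E$ by linearity and complex decomposition, so some $f\in E_+$ gives $Q_{-m}f\ne 0$, which is the desired positive eigenvector.

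For (ii), fix a pole $\lambda_0=s(A)+i\beta\in\sigma_{\mathrm{per}}(A)$ of order $k$, with leading Laurent coefficient $Q_{-k}^{\lambda_0}\ne 0$, and apply the same master inequality at $\lambda=\lambda_0+\varepsilon$ for $f\in E_\RR$, so that $\Re\lambda=s(A)+\varepsilon$. Multiplying through by $\varepsilon^k$ and letting $\varepsilon\downarrow 0$, the left-hand side tends in norm to $|P_n Q_{-k}^{\lambda_0}f|$. On the right, the Laurent expansion of $R(\cdot,A)$ at $s(A)$ yields $\varepsilon^k R(s(A)+\varepsilon,A)|f|=\varepsilon^{k-m}Q_{-m}|f|+O(\varepsilon^{k-m+1})$; if $k>m$, every such term vanishes in the limit, as does $\varepsilon^k r_{n,f}$. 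Hence $|P_n Q_{-k}^{\lambda_0}f|=0$ for all $n$ and $f$, whence $P_n\to I$ yields $Q_{-k}^{\lambda_0}=0$, contradicting the pole order. Therefore $k\le m$.

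The hard part will be the Laurent bookkeeping in (ii): one must match the power $\varepsilon^k$ coming from the left-hand residue against the power $\varepsilon^{-m}$ produced by the pole of $R(s(A)+\varepsilon,A)$ on the right, to force a collapse exactly when $k>m$. Beyond this, the argument is a transparent adaptation of the scaling technique already developed for Theorem~\ref{thm:spectral-bound}, supplemented by the mild observation that $Q_{-m}$ cannot vanish identically on the positive cone.
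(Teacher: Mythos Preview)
Your proposal is correct and follows essentially the same route as the paper: Theorem~\ref{thm:spectral-bound} plus Fredholm theory for the pole at $s(A)$, then the scaled master inequality~\eqref{eq:Pn-resolvent-est} to obtain positivity of $Q_{-m}$ and to bound peripheral pole orders. The only cosmetic difference is that the paper extracts positivity in (i) via the distance formula~\eqref{eq:asymp-pos-resolvent} (quoted from~\cite{Ar21}) rather than by reading off $|P_nQ_{-m}f|\le P_nQ_{-m}f$ directly from~\eqref{eq:Pn-resolvent-est}, but this amounts to the same computation.
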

\begin{proof}
	(i): We know that $s(A)\in\sigma(A)$ by Theorem~\ref{thm:spectral-bound}. As shown in the discussion preceding the proof of that Theorem, the assumption $s_{\mathrm{ess}}(A)<s(A)$ implies that $s(A)$ is a pole of the resolvent. Moreover, as remarked in~\cite[Corollary 6.2]{Ar21}, for every $n\in\NN$ and $f\in E_+$ it holds that
	\begin{equation}
	\label{eq:asymp-pos-resolvent}
		\lim_{\lambda\downarrow s(A)} (\lambda-s(A)) \dist(P_nR(\lambda,A)f, E_+) = 0;
	\end{equation}
	this follows in a straightforward manner from Proposition~\ref{prop:hol=abs} (c.f.~the calculation in~\cite[Corollary 7.3]{DGK1}).
	
	Let $m\in\NN$ be the pole order of $s(A)$, and denote by $Q_{-m}$ the coefficient of $(\lambda-s(A))^{-m}$ in the Laurent expansion of the resolvent about $s(A)$. For every $n\in\NN$, we have $(\lambda-s(A))^m P_n R(\lambda,A) \to P_n Q_{-m}$ strongly as $\lambda\downarrow s(A)$. Formula~\eqref{eq:asymp-pos-resolvent} then shows that
	\begin{equation*}
		0 = \lim_{\lambda\downarrow s(A)} (\lambda-s(A))^m \dist(P_nR(\lambda,A)f, E_+) = \dist(P_n Q_{-m}f, E_+)
	\end{equation*}
	for every $n\in\NN$ and $f\in E_+$. Therefore $P_n Q_{-m}$ is a positive operator and non-zero for sufficiently large $n$, since $Q_{-m}\ne 0$ and $P_n$ converges strongly to the identity. Letting $n\to\infty$, we find that $Q_{-m}$ is a positive, non-zero operator. It is then known that $s(A)$ is an eigenvalue and $\rg Q_{-m}\ne\{0\}$ consists of eigenvectors to $s(A)$. In addition, by the positivity of $Q_{-m}$, there exists a positive eigenvector to $s(A)$. (Essential facts about the Laurent expansion of the resolvent are collected in~\cite[Remark 2.1]{DGK1}. We note that $Q_{-m}$ here corresponds to the operator $U^{m-1}$ in that reference).
	
	(ii): Suppose that $\lambda_0:=s(A)+i\beta\in\sigma_{\mathrm{per}}(A)$, with $0\ne\beta\in\RR$, is a pole of the resolvent. Let $n\in\NN$ and $f\in E_\RR$ be arbitrary. For $\lambda=\lambda_0+\varepsilon$ with $\varepsilon>0$, we obtain
	\begin{equation*}
		|(\lambda-\lambda_0)^k P_n R(\lambda,A)f| \le \varepsilon^k P_n[R(s(A)+\varepsilon,A)|f|+r_{n,f}(s(A)+\varepsilon)]
	\end{equation*}
	for every $k\in\NN$, as in Formula~\eqref{eq:lambda-ibeta}. If $m$ is the pole order of $s(A)$ and $k>m$, then the right-hand side of the above inequality converges in norm to $0$ for each $n\in\NN$ and $f\in E_\RR$ as $\varepsilon\downarrow 0$. However, we may choose some $f\in E_\RR$ and sufficiently large $n$ such that the left-hand side has a non-zero limit as $\varepsilon\downarrow 0$. Thus if $k$ is the pole order of $\lambda_0$, then $k\le m$.
\end{proof}

\section*{Acknowledgements}
Part of this work was initiated during a pleasant visit to the Bergische Universit\"{a}t Wuppertal and the Technische Universit\"{a}t Dresden in June 2022. Together with the University of Sydney, I am grateful for the financial support of these three institutions. I thank Jochen Gl\"{u}ck in particular for sharing a number of useful ideas and insights, and I also thank Sahiba Arora and Daniel Daners for valuable discussions.

\end{document}